\documentclass[final]{elsarticle}
\usepackage{lmodern}

\usepackage{hyperref}

\usepackage[normalem]{ulem}
\usepackage{subfigure}
\usepackage{graphicx}
\usepackage{amssymb,amsfonts}
\usepackage{amssymb,amsfonts,amsmath,latexsym,dsfont}
\usepackage[ruled]{algorithm2e}
\usepackage{amsthm}
\usepackage{mathtools}
\usepackage{wasysym}
\usepackage{wrapfig}

\usepackage{tikz,pgfplots}
\usetikzlibrary{plotmarks}

\usepackage{fullpage}
\usepackage{graphicx}
\usepackage{mathrsfs}
\usepackage{framed,multirow}
\usepackage{setspace}

\usepackage{booktabs}

\usepackage{soul}

\newdimen\iwidth
\newdimen\iheight

\newcommand{\mc}[3]{\multicolumn{#1}{#2}{#3}}

\graphicspath{{Fig/}{./}}

\renewcommand{\div}{\nabla\cdot}
\newcommand{\grad}{\nabla}

\newcommand{\bfe}{{\bf e}}

\newcommand{\bfp}{{\bf p}}
\newcommand{\bfx}{ {\bf x}}

\newcommand{\bfv}{ {\bf v}}

\newcommand{\bfu}{{\bf u}}
\newcommand{\bfq}{{\bf q}}

\newcommand{\bfd}{{\bf d}}

\newcommand{\bfw}{{\bf w}}

\newcommand{\bfgamma}{{\boldsymbol \gamma}}

\newcommand{\bfmu}{{\boldsymbol \mu}}
\newcommand{\bflambda}{{\boldsymbol \lambda}}
\newcommand{\bfrho}{{\boldsymbol \rho}}

\newtheorem{theorem}{Theorem}[section]

\newtheorem{definition}{Definition}[section]

\pgfplotsset{compat=1.18}
\journal{Journal of \LaTeX\ Templates}

\begin{document}

\begin{frontmatter}

\title{Dispersion-corrected compact finite difference discretizations for the elastic Helmholtz equation in two and three dimensions}

\author[mymainaddress]{Rachel Yovel\corref{mycorrespondingauthor}}\ead{yovelr@bgu.ac.il}
\cortext[mycorrespondingauthor]{Corresponding author}
\author[eliaddress]{Eli Turkel}\ead{turkel@post.tau.ac.il}
\author[mymainaddress]{Eran Treister}\ead{erant@cs.bgu.ac.il}

\address[mymainaddress]{Institute for Interdisciplinary Computational Science, the Stein Faculty of Computer and Information Science, Ben-Gurion University of the Negev, Beer-Sheva, Israel.}
\address[eliaddress]{School of Mathematical Sciences, Tel Aviv University, Tel Aviv, Israel}

\begin{abstract}

We develop a family of compact, dispersion-corrected marker-and-cell (MAC)-based finite-difference discretizations for the elastic Helmholtz equation. 
While dispersion correction is well established for scalar Helmholtz problems, fewer approaches exist for the elastic problems, and most use non-compact stencils.
To this end, we introduce a generalized MAC (GMAC) framework that enables compact discretization of the elastic Helmholtz equation. 
We prove that even though GMAC is non-symmetric, this discretization produces no numerical dissipation and no shear-mode separation. 
For certain weights, GMAC has lower polarization error than standard MAC.
We also introduce an elastic dispersion correction (EDC) process, extending real-shifted-wavenumber dispersion correction methods from acoustic to elastic Helmholtz problems. 
We prove and demonstrate numerically that GMAC is second-order accurate.
Furthermore, we show that EDC substantially reduces dispersion for standard MAC and can provide additional correction in some instances of GMAC.
Dispersion analysis shows that the resulting method outperforms existing compact finite-difference discretizations in dispersion accuracy and is competitive with some non-compact schemes. 
We give numerical evidence for improved phase alignment in mildly heterogeneous media.

\end{abstract}

\begin{keyword}

Acoustic Helmholtz equation, Elastic Helmholtz equation, wave propagation, finite difference discretization, numerical dispersion.

\footnote{\textbf{Funding:} This research was supported by The Israel Science Foundation (grant No. 656/23). 
RY is supported by the Ariane de Rothschild women doctoral program, and by the Kreitman Negev scholarship at BGU.
The authors also thank the Lynn and William Frankel Center at the Stein Faculty for Computer and Information Science at BGU.}

\end{keyword}

\end{frontmatter}

\section{Introduction}\label{sec:intro}

The Helmholtz equation is widely used for modeling frequency-domain wave propagation in acoustic or elastic media. Applications of frequency-domain wave modeling include acoustics \cite{wang2010acoustic, elec2001acoustic}, electromagnetic radiation \cite{zubeldia2012energy}, and seismic wave modeling \cite{wang20113d}. One application that has gained popularity is full-waveform inversion (FWI) \cite{pratt1999, virieux2009overview, JointEikFWI17}, which recovers the properties of the Earth's subsurface by solving an inverse problem to estimate the wave velocity distribution within a domain from boundary measurements.
In certain applications, such as brain tomography and void detection, optimal results require computing wave propagation using the elastic Helmholtz equation \cite{li2022target, sun2021deep, borisov20183d, kormann2017acceleration}.


Finite difference (FD) schemes are widely used to model seismic wave propagation in a finite domain.
The challenge of discretizing such systems increases with frequency: high-frequency waves typically require fine grids, which, in turn, lead to large linear systems. 
The \emph{pollution error} implies that the increase is faster than linear. 
For a second-order finite difference, $N\propto (kl)^\frac{3}{2}$ grid points are needed, where  $l$ is the size of the domain and $k$ is the wavenumber.
In practice, typically about 10--25 grid points per wavelength is a common choice, depending on the desired accuracy and the specifics of the scheme. 
Elastic waves, which comprise both shear and pressure waves, require even finer grids, since shear waves have a higher wavenumber \cite{martin2006marmousi2}.
Over the years, many methods have been developed for modeling waves in the time domain \cite{marfurt1984accuracy,carcione1993seismic, robertsson1994viscoelastic}, and in the frequency domain \cite{vstekl1998accurate, gosselin20143d}. 
Frequency-domain forward modeling in FWI offers advantages, as it avoids CFL constraints and provides a mathematically transparent framework for incorporating attenuation and analyzing its behavior.
However, to keep the resulting matrices small enough to solve, there is an ongoing effort to reduce the number of grid points per wavelength needed to achieve sufficient accuracy


The need to reduce numerical dispersion has motivated the development of many discretization schemes for the acoustic Helmholtz equation. 
Compact high-order FD discretizations for the Helmholtz equations \cite{singer1998high, turkel2013compact, sutmann2007compact} are known to yield less numerical dispersion compared to standard second-order stencils. 
In addition, compact schemes simplify the application of physical boundary conditions and can be applied efficiently on graphical processing units (GPUs). 
However, high-order schemes do not guarantee optimal numerical dispersion, especially at low numbers of grid points per wavelength.
To address this, \cite{operto20073d, aghamiry2022accurate} discretize the equation on combined rotated grids to reduce numerical dispersion, and \cite{stolk2016dispersion} use a ray method to minimize dispersion.
Some works suggest optimizing the weights of a compact stencil to reduce numerical dispersion; in \cite{babuvska1995generalized}, it is done in the context of a finite element method (FEM) on structured grids, and in \cite{jo1996optimal}, in the context of FD.
In \cite{cocquet2017finite}, a general compact stencil is equipped with an optimized real shifted wavenumber.
This method might be less practical in cases with a wide range of numbers of grid points per wavelength, 
despite its impressive phase alignment for as few as 2.5 grid points per wavelength, since all the weights need to be re-optimized for each case.
In a recent work, \cite{cocquet2024asymptotic}, an asymptotically optimal real shift is given, in closed form, to guarantee a minimal numerical dispersion for enough grid points per wavelength.
That method holds for 5 grid points per wavelength and more.
However, none of the above-mentioned methods is directly applicable to the elastic Helmholtz equation.



For elastic wave propagation, dispersion-reduction schemes based on different staggering strategies have been developed in the time domain or using stress-based formulations, including \cite{levander1988fourth, xie2024spatial, bahreman2026pure}.
The dispersion properties of elastic-wave discretizations have also been investigated using the boundary element method in \cite{seriani2008dispersion}.  
In the frequency domain, \cite{li2016Fourth} proposed a dispersion-reduced FD scheme for elastic wave propagation. Nevertheless, the resulting stencils are non-compact. 
In \cite{gu201321}, a suggested non-compact FD method for elastic wave propagation in the frequency domain is compared to a compact discretization, showing the need for $\times 3$ more grid points per wavelength in the compact case to achieve the same accuracy. 
Another compact finite-difference discretization for elastic wave modeling was proposed in \cite{gosselin20143d}.
Although these methods significantly improve phase alignment in elastic waves, they do not achieve a correction comparable to that of existing discretization methods for acoustic waves. 
The difficulty in reducing the numerical dispersion for elastic waves arises from the need to simultaneously align the shear and pressure waves, which have different wavenumbers. 
This difference becomes increasingly pronounced as the Poisson ratio increases. 
Motivated by these considerations, we aim to construct a compact discretization for the elastic Helmholtz equation whose dispersion properties are competitive with those of existing non-compact schemes.




In this work, we present two main contributions:
\begin{itemize}
\item a generalized MAC framework (GMAC) for the elastic Helmholtz equation that enables correcting the dispersion through the choice of weights, and
\item an elastic dispersion correction (EDC) process that further corrects the dispersion by simultaneously shifting the shear and pressure wavenumbers.
\end{itemize}
The GMAC framework provides additional degrees of freedom for controlling the discretization's dispersion properties, while the EDC extends wavenumber shifting to the elastic case by enabling simultaneous correction of shear and pressure waves. 
Together, these two components provide a flexible framework for reducing dispersion in elastic Helmholtz discretizations.


We analyze the dispersion of our method by sampling the Fourier symbols of the resulting saddle-point system. 
We compare our discretization to existing methods, and show that it achieves significantly better phase alignment compared to other compact methods.
Moreover, the dispersion error of our method is competitive with that of non-compact methods.
To show that our GMAC scheme is physically faithful despite being non-symmetric, we prove that it introduces no numerical dissipation.
Furthermore, in three dimensions, we prove that our discretization exhibits no shear-mode separation, and we show that its polarization error is lower than that of standard methods.
We prove analytically that the GMAC method is second-order accurate for any choice of weights, and demonstrate it numerically.
Moreover, the numerical results establish a further significant reduction of truncation error through our EDC process.
Although we developed this process for constant-coefficient problems, we also show numerically that it improves phase alignment in mildly heterogeneous media.


The paper is organized as follows: in Section~\ref{sec:acoustic}
we present the acoustic Helmholtz equation and existing discretization approaches.
In Section~\ref{sec:elastic} we present our first main contribution, and in Section \ref{sec:lame}, our second main contribution.
In Section~\ref{sec:analysis} we analyze numerical dispersion and develop general theory for the absence of numerical dissipation and shear-mode separation for this family of discretization methods.
In Section \ref{sec:numerical}, we demonstrate the numerical dispersion and polarization of the resulting discretizations, compare them to existing discretization methods, and test the efficiency of the dispersion correction in heterogeneous media.
In Section \ref{sec:OOA}, we analyze the order of accuracy of our method and numerically demonstrate the reduction of the truncation error for two-dimensional and three-dimensional media.
In Section \ref{sec:heterogeneous}, we give computational results for mildly heterogeneous media.
Finally, we discuss conclusions in Section~\ref{sec:conclusion}.

\section{Compact FD schemes for the acoustic Helmholtz equation}
\label{sec:acoustic}

In this section, we summarize the existing FD methods for the acoustic Helmholtz equation, which serves as a reference problem. This includes compact high-order schemes and dispersion-correction methods, which we use in Section \ref{sec:elastic} to design our methods for the elastic Helmholtz equation.

The acoustic Helmholtz equation, which models pressure waves in the frequency domain, is given by:
\begin{equation}\label{eq:acousitcHelm}
-\Delta p(\vec{x}) - k^2 p(\vec{x}) = q(\vec{x}), \quad \vec x \in \Omega,
\end{equation}
where $p = p(\vec{x})$ is the Fourier transform of the wave's pressure field, $q$ is the source of the waves, and $k = \kappa\omega$ is the wavenumber, with $\kappa$ the slowness of the medium (the inverse of the wave velocity) and $\omega=2\pi f$ the angular frequency.
When discretizing this equation, to mimic propagation in an infinite domain, it is commonly truncated to a finite domain equipped with absorbing boundary conditions (ABC) \cite{engquist1977absorbing} or perfectly matched layers (PML) \cite{berenger1994perfectly}.

\subsection{Compact schemes for acoustic Helmholtz in two dimensions}\label{subsec:acoustic2D}

A compact fourth-order FD scheme for the two-dimensional acoustic Helmholtz equation was developed in \cite{singer1998high}, based on a Pad{\'e} approximation.
In stencil notation, the discretization from \cite{singer1998high} reads

\begin{equation} \label{eq:helm4thCompact}
H_{4th} = -\frac{1}{h^2}
\begin{bmatrix}
\frac{1}{6} & \frac{2}{3}  & \frac{1}{6} \\[0.4em]
\frac{2}{3}  & -\frac{10}{3}  & \frac{2}{3}  \\[0.4em]
\frac{1}{6} & \frac{2}{3}  & \frac{1}{6}
\end{bmatrix}
- k^2
\begin{bmatrix}
 &  \frac{1}{12} & \\[0.4em]
 \frac{1}{12} & \frac{2}{3} &  \frac{1}{12} \\[0.4em]
 & \frac{1}{12} &
\end{bmatrix}.
\end{equation}
As a high-order stencil, \eqref{eq:helm4thCompact} demonstrates reduced numerical dispersion when the number of grid points per wavelength is moderate to high. 
This behavior is expected when considering the impact of dispersion error on the overall error in the acoustic Helmholtz equation. 
The truncation error consists of both dispersion and dissipation errors. In the time domain, dispersion error is associated with odd-order spatial derivatives, while dissipation error relates to even-order derivatives. 
In frequency-domain wave modeling, dissipation errors occur only when complex symbolic eigenvalues are present, which is not the case with the stencil \eqref{eq:helm4thCompact}. Without dissipation error, the truncation error equals the dispersion error, thereby minimizing dispersion in high-order methods. However, this holds as $h\to0$ for high-order stencils but does not guarantee optimal dispersion with a low number of grid points per wavelength.

To address dispersion errors with fewer grid points per wavelength, researchers have developed specialized dispersion-correction methods. 
We present here some of these methods, focusing on compact stencils.
We discuss the method from \cite{jo1996optimal}, which proposes a global choice of weights for a compact stencil, and \cite{cocquet2017finite}, which improves accuracy by optimizing stencil weights for each number of grid points per wavelength and incorporating a real shift as a dispersion corrector.
In \cite{cocquet2024asymptotic}, a fourth-order method is equipped with an asymptotically calculated modified wavenumber, given by a closed-form expression, which achieves excellent dispersion correction for as few as 5 grid points per wavelength, while being less efficient for extremely low numbers of grid points per wavelength.

In more detail, in \cite{jo1996optimal},
a general symmetric 9-point stencil discretization \eqref{eq:acousitcHelm}
is described in terms of three variables $a,b$ and $c$:
\begin{equation}\label{eq:JSS_sten}
H_{9pt} = -\frac{1}{h^2}\left(a \begin{bmatrix}
 & 1 & \\
1 & -4 & 1 \\
 & 1 &
\end{bmatrix}
+ \frac{1-a}{2}
\begin{bmatrix}
1 &  & 1\\
 & -4 &  \\
1 &  & 1
\end{bmatrix}\right)
- k^2 \begin{bmatrix}
\frac{1-b-c}{4} & \frac{c}{4} & \frac{1-b-c}{4} \\[0.5em]
\frac{c}{4} & b & \frac{c}{4} \\[0.5em]
\frac{1-b-c}{4} & \frac{c}{4} & \frac{1-b-c}{4}
\end{bmatrix}.
\end{equation}
Generally, the stencil \eqref{eq:JSS_sten} is second-order accurate.
However, some weights can achieve fourth- or sixth-order accuracy; e.g., $[a,b,c] = [2/3,2/3,1/3]$ yields the fourth-order stencil \eqref{eq:helm4thCompact}.
Subsequently, in \cite{jo1996optimal}, $a,b$ and $c$ are chosen to minimize the numerical dispersion in the following sense: 
the relative dispersion error is integrated over the propagation directions and over different numbers of grid points per wavelength $G$, and minimized in the $L^2$ norm. For $G\geq 4$, the authors reported the following values $[a,b,c] = [0.5461,0.6248,0.37524]$ as optimal weights in terms of numerical dispersion.

In \cite{cocquet2017finite},  a further dispersion correction is suggested, based on a stencil of the same form \eqref{eq:JSS_sten}, equipped with a real-shifted wavenumber to align the phases.
The authors present an optimization process that chooses $[a,b,c]$ and a modified wavenumber $k^*$ for each different value of $G$. 
For $G=4$, for example, the values $[a,b,c] = [0.6054,1.0532,0.0002]$ and $k^* = 0.87725k$ are reported as optimal, and for $G=2.5$, the optimal values are $[a,b,c] = [0.5324,1.0553,0.0003]$ and $k^* = 0.72186k$. 

In summary, the presented dispersion-correction compact discretizations in two dimensions offer distinct advantages: 
Some achieve higher accuracy and require no re-optimization of weights, while others achieve excellent alignment with very few grid points per wavelength.
The mechanism we develop later enables us to apply the same ideas to achieve a similar dispersion correction for the elastic Helmholtz equation in the mixed formulation using only compact stencils.

\subsection{Compact schemes for acoustic Helmholtz in three dimensions}\label{subsec:acoustic3D}

We derive a three-dimensional generalization of the acoustic Helmholtz discretization \eqref{eq:helm4thCompact} using Pad{\'e} approximations.
This is based on previous work \cite{harari1995accurate, turkel2013compact}, in which the resulting discretization was not explicitly given in a stencil notation.
For completeness, we present the full derivation in the following.

For ease of notation, we use $\delta_x,\delta_y$ and $\delta_z$ to denote discrete derivatives in the $x_1,x_2$ and $x_3$ directions, respectively.
Define $\delta_{xx}$ as the second-order divided difference in the $x$ direction with a grid size $h$ and with
a similar notation in the $y$ and $z$ directions.
Applying a Pad{\'e} fourth-order approximation to the acoustic Helmholtz equation \eqref{eq:acousitcHelm} in three dimensions, gives
\begin{equation}\label{eq:pade}
  \frac{\delta_{xx}}{1+\frac{h^2}{12} \delta_{xx}} u
+ \frac{\delta_{yy}}{1+\frac{h^2}{12} \delta_{yy}} u
+ \frac{\delta_{zz}}{1+\frac{h^2}{12} \delta_{zz}} u
+ k^2 u = f.
\end{equation}
Multiplying by the product of the denominators and simplifying, we obtain
\begin{align}\label{eq:harari}
&\left[ \left(\delta_{xx} + \delta_{yy} + \delta_{zz}\right)
+ \frac{h^2}{6} \left(\delta_{xx}\delta_{yy} + \delta_{xx}\delta_{zz} + \delta_{yy}\delta_{zz}\right) + \frac{3 h^4}{144} \delta_{xx}\delta_{yy}\delta_{zz} \right] u \\
&+ \left[ 1 + \frac{h^2}{12} \left(\delta_{xx} + \delta_{yy} + \delta_{zz}\right)
+ \frac{h^4}{144} \left(\delta_{xx}\delta_{yy} + \delta_{xx}\delta_{zz} + \delta_{yy}\delta_{zz}\right)
+ \frac{h^6}{1738} \delta_{xx} \delta_{yy} \delta_{zz} \right] k^2 u \nonumber \\
&= \left[ 1 + \frac{h^2}{12} \left(\delta_{xx} + \delta_{yy} + \delta_{zz}\right)
+ \frac{h^4}{144} \left(\delta_{xx}\delta_{yy} + \delta_{xx}\delta_{zz} + \delta_{yy}\delta_{zz}\right)
+ \frac{h^6}{1738} \delta_{xx} \delta_{yy} \delta_{zz} \right] f. \nonumber
\end{align}
By \eqref{eq:harari}, the fourth-order accurate Helmholtz operator is composed of the Laplacian
\begin{equation}
\Delta_{4th} = \left(\delta_{xx} + \delta_{yy} + \delta_{zz}\right) +\frac{h^2}{6} \left(\delta_{xx}\delta_{yy} + \delta_{xx}\delta_{zz} + \delta_{yy}\delta_{zz}\right) +O(h^4),
\end{equation}
and the mass term
\begin{equation}
M_{4th} = \left[ 1+ \frac{h^2}{12}\left(\delta_{xx}+\delta_{yy}+\delta_{zz}\right) \right] k^2 + O(h^4),
\end{equation}
where, to keep the fourth order of accuracy, the forcing term $f$ on the right-hand side must be discretized similarly to the mass.

Next, we translate the resulting Laplacian and mass expressions to stencil notation.
By definition,
\begin{equation} \label{eq:stencilLapStandard}
\delta_{xx} + \delta_{yy} + \delta_{zz} = \Delta_{standard} =
\frac{1}{h^2}\left[
\begin{bmatrix}
 &  & \\
 &1& \\
 &  &
\end{bmatrix}
\begin{bmatrix}
 &1& \\
1&-6&1\\
 &1&
\end{bmatrix}
\begin{bmatrix}
 &  & \\
 &1& \\
 &  &
\end{bmatrix}
\right].
\end{equation}
To calculate fourth-order derivatives, we apply
\begin{equation}
\delta_{xx}\delta_{yy} \phi_{i,j} = \delta_{xx} \left(\frac{\phi_{i,j-1} -2\phi_{i,j}+\phi_{i,j+1}}{h^2} \right) = \frac{1}{h^4}
\begin{bmatrix}
1 & -2 & 1 \\
-2 & 4 & -2 \\
1 & -2 & 1
\end{bmatrix},
\end{equation}
and hence, from symmetry reasons, we obtain
\begin{align}
 \frac{h^2}{6} \left(\delta_{xx}\delta_{yy} + \delta_{xx}\delta_{zz} + \delta_{yy}\delta_{zz}\right)& = 
 \frac{1}{6h^2}\left[
\begin{bmatrix}
 &  & \\
 & & \\
 &  &
\end{bmatrix}
\begin{bmatrix}
1 & -2 & 1 \\
-2 & 4 & -2 \\
1 & -2 & 1
\end{bmatrix}
\begin{bmatrix}
 &  & \\
 & & \\
 &  &
\end{bmatrix}
\right] + \\ \nonumber
& + \frac{1}{6h^2}\left[
\begin{bmatrix}
 &  & \\
1 & -2& 1\\
 &  &
\end{bmatrix}
\begin{bmatrix}
 &  &  \\
-2 & 4 & -2 \\
 &  &
\end{bmatrix}
\begin{bmatrix}
 &  & \\
1 & -2& 1\\
 &  &
\end{bmatrix}
\right] + \\ \nonumber
& + \frac{1}{6h^2}\left[
\begin{bmatrix}
 & 1 & \\
 & -2& \\
 &  1&
\end{bmatrix}
\begin{bmatrix}
 & -2 &  \\
 & 4 &  \\
 & -2 &
\end{bmatrix}
\begin{bmatrix}
 & 1 & \\
 & -2& \\
 &  1&
\end{bmatrix}
\right] = \\ \nonumber
& = \frac{1}{6h^2}\left[
\begin{bmatrix}
 & 1 & \\
 1& -4&1 \\
 &  1&
\end{bmatrix}
\begin{bmatrix}
 1& -4 &1  \\
-4 & 12 & -4  \\
1 & -4 & 1
\end{bmatrix}
\begin{bmatrix}
 & 1 & \\
1 & -4& 1\\
 &  1&
\end{bmatrix}
\right].
\end{align}
Summing the above calculated terms, we get the 19-point Laplacian
\begin{equation} \label{eq:stencilLaplacian3D}
\Delta_{4th} = \frac{1}{6h^2}
\left[
\begin{bmatrix}
 & 1 & \\
 1 & 2 & 1 \\
 &  1 &
\end{bmatrix}
\begin{bmatrix}
 1 & 2 & 1  \\
2 & -24 & 2  \\
1 & 2 & 1
\end{bmatrix}
\begin{bmatrix}
 & 1 & \\
 1 & 2 & 1 \\
 &  1 &
\end{bmatrix}
\right]
\end{equation}
and the 7-point mass
\begin{equation} \label{eq:stencilMass3D}
M_{4th} = \frac{1}{12}
\left[
\begin{bmatrix}
 &  & \\
 & 1 &  \\
 &   &
\end{bmatrix}
\begin{bmatrix}
 & 1 &   \\
1 & 6 & 1  \\
  & 1 &
\end{bmatrix}
\begin{bmatrix}
 &  & \\
 & 1 &  \\
 &   &
\end{bmatrix}
\right] k^2,
\end{equation}
which forms the fourth-order accurate discretization of the acoustic Helmholtz operator in three dimensions.
Similarly to the two-dimensional case, a sixth-order version can be constructed using the same Laplacian stencil with a 19-point mass stencil.
However, our experience shows that the sixth-order scheme does not significantly reduce numerical dispersion, so we focus on generalizing the fourth-order scheme, equipped with dispersion correction techniques, to the elastic Helmholtz equation.
As we show below, the elastic generalization is not fourth-order accurate; nevertheless, this scheme can significantly reduce the dispersion error.

\section{A generalized MAC framework for the elastic Helmholtz equation (GMAC)}\label{sec:elastic}

In this section we present one of our main contributions:
a generalized MAC framework for the discretization of the elastic Helmholtz equation.
This generalization enables compact stencils with various weights, extending many dispersion-correction discretization schemes from the acoustic to the elastic Helmholtz equation

The elastic version of the Helmholtz equation in an isotropic medium with no attenuation is given by
\begin{equation}\label{eq:elasticHelm}
\grad\lambda\div\vec u + \vec\div\mu\left(\vec\grad\vec u+\vec\grad\vec u^T\right) +\rho \omega^2  \vec u = \vec q_{s}(\vec{x})
\end{equation}
or by the following formulation, which is equivalent in the case of constant coefficients,
\begin{equation}\label{eq:elasticHelmEquivalent}
\grad(\lambda + \mu)\div\vec u + \vec\div\mu\vec\grad\vec u +\rho \omega^2 \vec u = \vec q_{s}(\vec{x}),
\end{equation}
where $\vec u(\vec x)$ is the vector of displacements, $\lambda$ and $\mu$ are the Lam{\'e} coefficients determining the stress-strain relationship in the medium, $\rho$ is the density, and $\vec{q}_s(\vec x)$ is the source.
Similar to the acoustic case, $\omega$ is the angular frequency.
The pressure and shear wave velocities and their corresponding wavenumbers are given by
\begin{equation}\label{eq:ContVelocities}
V_p = \sqrt{\frac{\lambda+2\mu}{\rho}} \quad \text{and} \quad V_s = \sqrt{\frac{\mu}{\rho}}, \qquad k_p = \frac{\omega}{V_p} \quad \text{and} \quad k_s = \frac{\omega}{V_s}.
\end{equation}
respectively (see, e.g., \cite{li2016Fourth}).
Pressure waves are always faster (have a lower wavenumber or slowness), and in nearly incompressible media $ V_p$ is significantly greater than $ V_s$.
The Poisson ratio is defined by
$\sigma = \frac{\lambda}{2(\lambda+\mu)},$
and the nearly incompressible case is characterized by $\sigma\to 0.5$. 

The mixed formulation for the elastic Helmholtz equation was suggested in \cite{treister2024hybrid}, following the mixed formulation for linear elasticity introduced in \cite{gaspar2008distributive}. 
It gives the elastic Helmholtz equation a saddle-point form that couples displacement and pressure.
To obtain the mixed formulation, a new pressure\footnote{The variable $p$ has units of pressure, similarly to $p$ from the acoustic Helmholtz equation \eqref{eq:acousitcHelm}. 
In fact, assuming constant coefficients, one can derive the acoustic equation for $p$ from the elastic \eqref{eq:elasticMixed} by taking the divergence of the first equation.} variable is introduced $p = -(\lambda+\mu)\div\vec{u},$   yields
\begin{equation} \label{eq:elasticMixed}
\begin{pmatrix}
\vec\div\mu\vec\grad + \rho \omega^2 & -\grad \\
\div & \frac{1}{\lambda + \mu} I
\end{pmatrix}
\begin{pmatrix}
 \vec u \\
 p
\end{pmatrix}
=
\begin{pmatrix}
\vec q_{s} \\
0
\end{pmatrix},
\end{equation}
where $I$ is the identity operator.
Since \eqref{eq:elasticHelmEquivalent} is simply the primal Schur complement of \eqref{eq:elasticMixed}, any discretization scheme for \eqref{eq:elasticMixed} can be used to discretize \eqref{eq:elasticHelmEquivalent}.

\begin{figure}
\begin{center}	\newcommand{\image}[1]{\includegraphics[width=0.25\linewidth]{./#1}}
    \subfigure[\footnotesize MAC locations in 3D]{\image{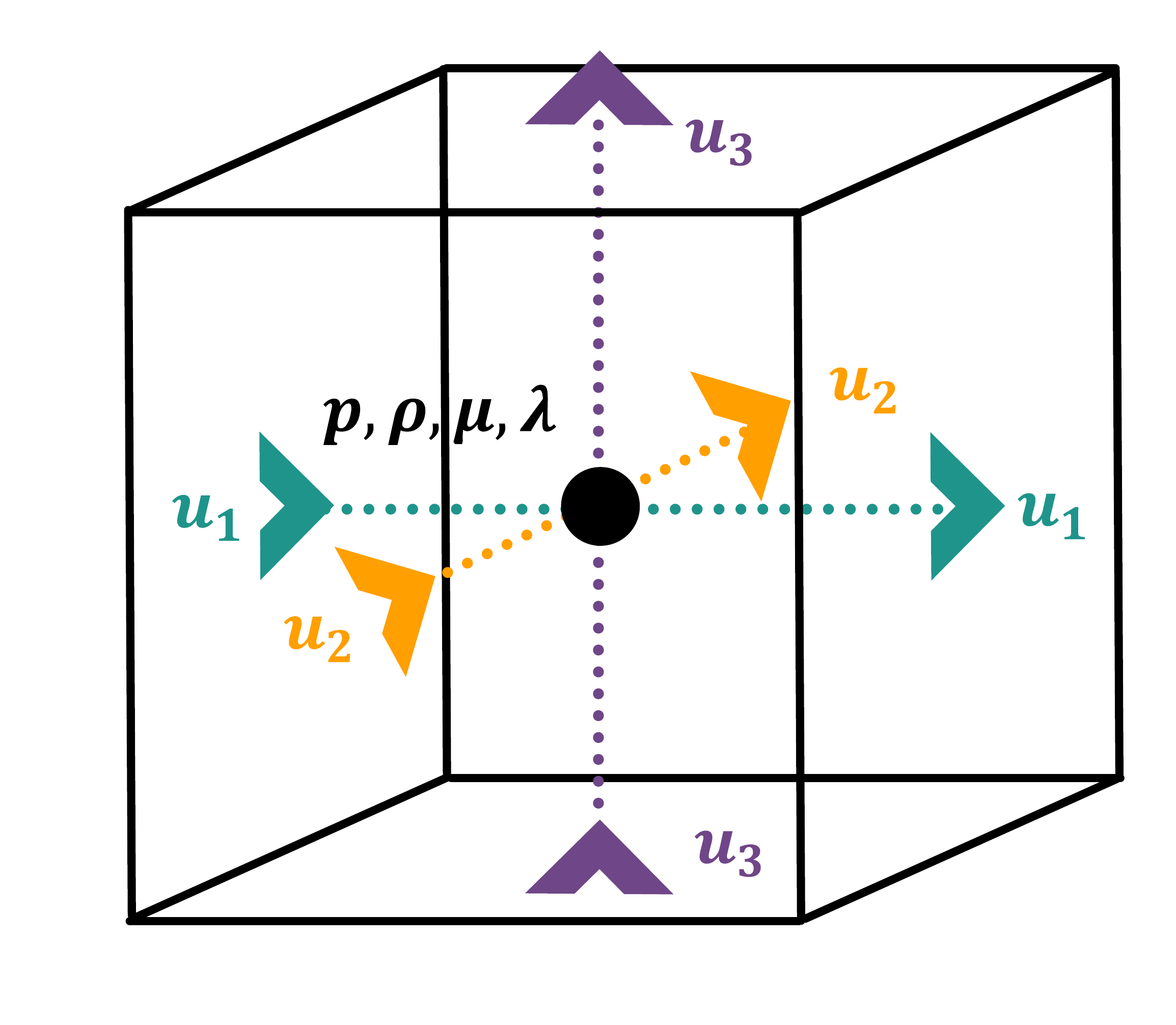}\label{fig:cell}}
    \hspace{30pt}
    \subfigure[\footnotesize $\grad$ and $\div$ stencils in 2D ]{\image{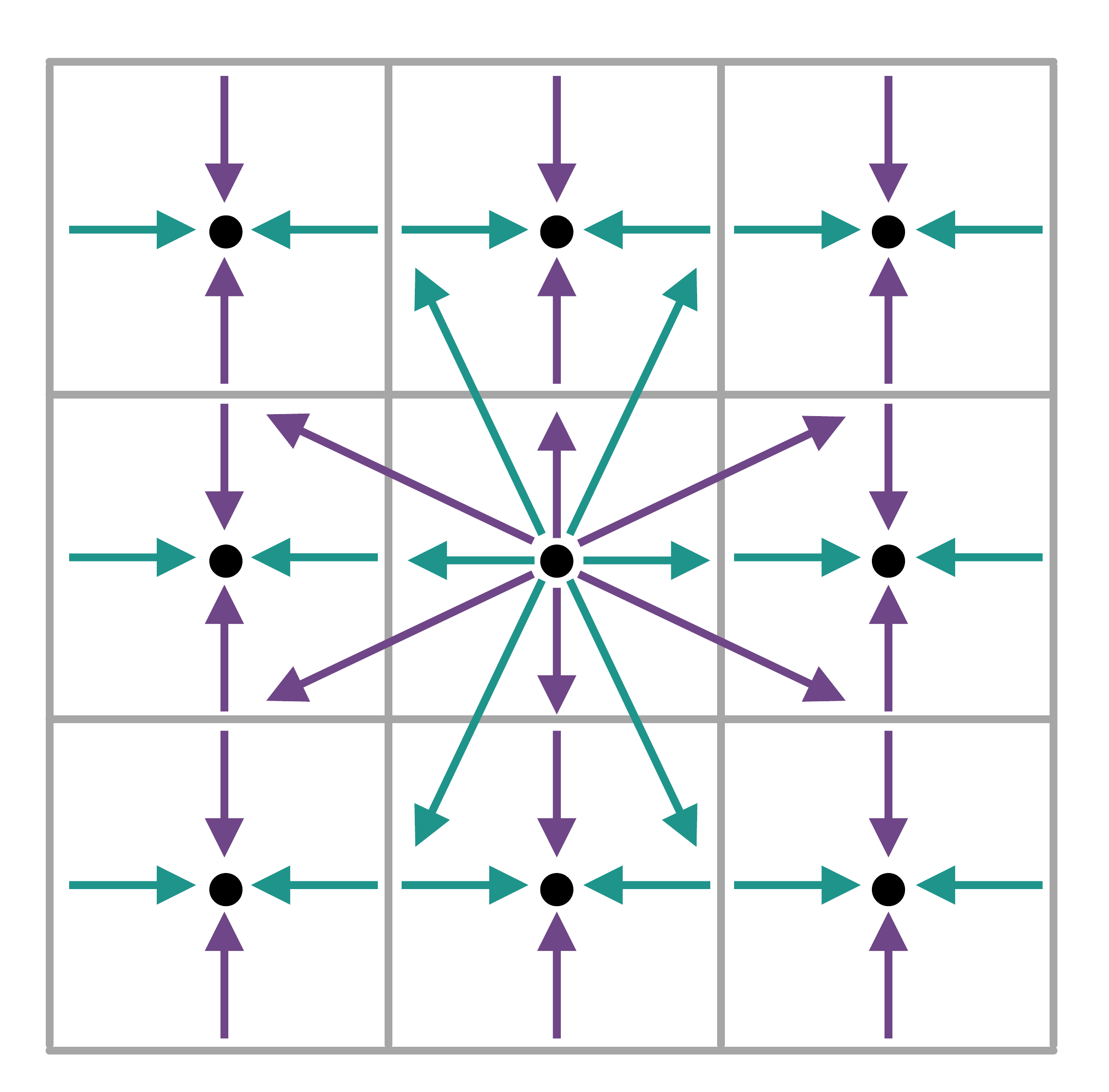}\label{fig:dxdystencils}}\\
\end{center}
\caption{On the left: MAC scheme locations in three dimensions; displacements are located on the faces, and pressure and physical variables are in the cell centers. On the right: 6-point gradient and standard 2-point divergence in two dimensions (in turquoise for $\partial_{x_1}$ and in purple for $\partial_{x_2}$) that result in a 9-point Laplacian.
}\label{fig:discretization}
\end{figure}

The saddle-point structure of \eqref{eq:elasticMixed} motivates discretizing the system by MAC-based \cite{harlow1965numerical} staggered grid schemes:
the displacement components $\bfu_1,\bfu_2$ and $\bfu_3$ are located on the faces, whereas the pressure $\bfp$ is located at the cell centers, see Figure \ref{fig:cell}.
While a standard MAC discretization uses standard second-order central-difference operators at these grid locations, we generalize it by keeping the same locations and modifying the weights.
In the case of non-constant parameters (which can come from heterogeneous media or absorbing boundary conditions), we locate the physical variables
$\bfgamma,\bfrho,\bflambda,\bfmu$ in the cell centers.

In the next subsections, we develop a mechanism that allows a general choice of stencils for the first derivatives that keeps the resulting Laplacian compact, and a general compact choice of the mass term. 
In two dimensions, the discretization suggested in \cite{yovel2024lfa} is a special case of GMAC, while the additional degrees of freedom in the mass term that we present below enable a further dispersion reduction. In three dimensions, our framework yields a new compact discretization.

\subsection{Construction of elastic generalized MAC schemes (GMAC) in two dimensions}\label{subsec:schemes2D}

In this subsection, we present GMAC, a family of generalized MAC methods for discretizing the elastic Helmholtz equation in mixed formulation \eqref{eq:elasticMixed}.
It generalizes standard MAC by enabling any compact stencil for the Laplacian and mass, with compatible stencils for the divergence and the gradient.
Numerical dispersion can be reduced significantly by enabling a general choice of weights.

More explicitly, the Laplacian stencil from \eqref{eq:helm4thCompact} is the convolution of the following gradient and divergence. 
Let the first derivative weighted by $a$ be
\begin{equation} \label{eq:dxSpread}
(\partial_{x_1})^a_{h/2} =          a \left(\partial_{x_1}\right)^{standard} + 
         (1-a) \left(\partial_{x_1}\right)^{spread}
\end{equation}
 where
 \begin{equation}
     \left(\partial_{x_1}\right)^{standard} = 
       \frac{1}{h}  \begin{bmatrix}
        -1 & * & 1 \\
    \end{bmatrix}
    \quad \text{and} \quad
    \left(\partial_{x_1}\right)^{spread} =
    \frac{1}{4h}\begin{bmatrix}
        -1 & & 1\\
        -2 & * & 2 \\
        -1 & & 1
    \end{bmatrix}.
\end{equation}
Then the gradient and the divergence are
\begin{equation} \label{eq:GradAndDiv}
\nabla_h^{a} =
\begin{pmatrix}	(\partial_{x_1})^a_{h/2} \\[0.5em]
(\partial_{x_2})^a_{h/2}
\end{pmatrix}
\quad \text{ and } \quad
(\nabla\cdot)_h =
\begin{pmatrix}
(\partial_{x_1})_{h/2} & (\partial_{x_2})_{h/2}
\end{pmatrix}.
\end{equation}
Fig. \ref{fig:dxdystencils} demonstrates the compactness of the resulting stencil using a non-symmetric choice: 
If we take a 2-point divergence and 6-point gradient, we obtain a 9-point Laplacian in the locations of their composition.

Let $M^{b,c}$ be a mass term similar to that of \eqref{eq:JSS_sten}, only with the (acoustic) wavenumber $k^2$ replaced by $\rho\omega^2$.
As a result, we present the following \textbf{GMAC (generalized marker and cell) discretization} of \eqref{eq:elasticMixed}:
\begin{equation} \label{eq:MAC-dispersion}
\mathcal{H}^{[a,b,c]}
\begin{pmatrix}
 \vec \bfu \\
 \bfp
\end{pmatrix}
=
\begin{pmatrix}
A & B_1^T \\
B_2 & -C
\end{pmatrix}
\begin{pmatrix}
 \vec \bfu \\
 \bfp
\end{pmatrix}
=
\begin{pmatrix}
-\vec\grad^T_h A_e(\bfmu) \vec\grad^a_h +  M^{b,c} A_f(\bfrho) & (\grad\cdot)_h^T \\
(\grad\cdot)_h^a & \text{diag}\left(\frac{1}{\bflambda+\bfmu}\right)
\end{pmatrix}
\begin{pmatrix}
 \vec \bfu \\
 \bfp
\end{pmatrix}
=
\begin{pmatrix}
\vec \bfq_{s} \\
\mathbf{0}
\end{pmatrix},
\end{equation}
where $A_e(\bfmu), A_f(\bfrho)$ are diagonal matrices with the edge-averaged $\bfmu$ values and face-averaged $\bfrho$ values on their diagonals.
We note that the two-dimensional discretization in \cite{yovel2024lfa} is recovered as a special case of the GMAC discretization in \eqref{eq:MAC-dispersion}, with $a=b=2/3$ and $c=1-b=1/3$.
Although this choice yields less numerical dispersion than the standard MAC, the more general framework suggested here allows for further dispersion reduction and adapts acoustic dispersion-correction methods (such as \cite{jo1996optimal}) to the elastic Helmholtz equation.

\subsection{Elastic generalized MAC schemes (GMAC) in three dimensions}\label{subsec:schemes3D}

We next generalize the discussion to three dimensions in three steps:
First, we present the Laplacian \eqref{eq:stencilLaplacian3D}
as a convex combination of a standard three-dimensional Laplacian stencil and a ``skew'' Laplacian stencil.
Second, we develop first-derivative stencils whose Kronecker product yields these Laplacians.
Lastly, we present a general symmetric 19-point mass stencil analogous to the mass in \eqref{eq:JSS_sten}, from which the 7-point mass stencil \eqref{eq:stencilMass3D} is accepted as a special case.

For the first step, we note that the fourth-order Laplacian \eqref{eq:stencilLaplacian3D} is a convex combination of the standard second-order stencil \eqref{eq:stencilLapStandard} and the following ``skew'' Laplacian stencil 
\begin{equation} \label{eq:stectilLapSkew}
\Delta_{skew}
=
\frac{1}{4h^2}
\left[
\begin{bmatrix}
 & -1 & \\
-1 &  & -1 \\
 &  -1 &
\end{bmatrix}
\begin{bmatrix}
-1 &  & -1  \\
 & 12 &   \\
-1  &  & -1
\end{bmatrix}
\begin{bmatrix}
 & -1 & \\
-1 &  & -1 \\
 &  -1 &
\end{bmatrix}
\right].
\end{equation}
A general symmetric 19-point Laplacian stencil is hence of the form
\begin{equation}\label{eq:Lap-19-pt-sten}
\Delta_{19pt}=
\frac{1}{h^2}
\left[
\begin{bmatrix}
 & \frac{1-a}{12} & \\[0.5em]
\frac{1-a}{12} & \frac{a}{6} & \frac{1-a}{12} \\[0.5em]
 &  \frac{1-a}{12} &
\end{bmatrix}
\begin{bmatrix}
\frac{1-a}{12} & \frac{a}{6} & \frac{1-a}{12}  \\[0.5em]
\frac{a}{6} & -4 &  \frac{a}{6} \\[0.5em]
\frac{1-a}{12} & \frac{a}{6} & \frac{1-a}{12}
\end{bmatrix}
\begin{bmatrix}
 & \frac{1-a}{12} & \\[0.5em]
\frac{1-a}{12} & \frac{a}{6} & \frac{1-a}{12} \\[0.5em]
 &  \frac{1-a}{12} &
\end{bmatrix}
\right],
\end{equation}
where the special case $a=1/3$ yields the Laplacian stencil \eqref{eq:stencilLaplacian3D}.

For the second step, we suggest first derivatives whose combination leads to the Laplacian \eqref{eq:Lap-19-pt-sten}.
The standard first derivative $(\partial_{x_1})_{h/2} = \frac{1}{h} \begin{bmatrix} -1 & * & 1\end{bmatrix}$, together with the similarly defined standard first derivatives in the other directions for $(\partial_{x_2})_{h/2}$ 
and $(\partial_{x_3})_{h/2}$, lead to the standard gradient and divergence, and their product is the standard Laplacian \eqref{eq:stencilLapStandard}.
Using the standard first derivatives for the divergence and the following spread first derivatives for the gradient,
\begin{equation}\label{eq:del_spread}
(\partial_{x_1})_{h/2}^{spread} =
\frac{1}{8h}
\left[
\begin{bmatrix}
 &  & \\
-1 &  & 1 \\
 &   &
\end{bmatrix}
\begin{bmatrix}
-1 &  & 1  \\
-4 & * & 4  \\
-1  &  & 1
\end{bmatrix}
\begin{bmatrix}
 &  & \\
-1 &  & 1 \\
 &   &
\end{bmatrix}
\right]
\end{equation}
(and similarly for the $x_2$ and $x_3$ directions), we get the gradient and divergence
\begin{equation}\label{eq:GradAndDiv3D}
\grad_h^a = \begin{pmatrix}
(\partial_{x_1})_{h/2}^a \\[0.5em]
(\partial_{x_2})_{h/2}^a \\[0.5em]
(\partial_{x_3})_{h/2}^a
\end{pmatrix}
\qquad \text{and} \qquad
(\div)_h = \begin{pmatrix}
(\partial_{x_1})_{h/2} & (\partial_{x_2})_{h/2} & (\partial_{x_3})_{h/2}
\end{pmatrix},
\end{equation}
where $(\partial_{x_1})_{h/2}^a = a(\partial_{x_1})_{h/2}^{standard}+(1-a)(\partial_{x_1})_{h/2}^{spread}$.
Indeed, $(\div)^a_h \grad_h = \Delta_{19pt}$, which is what we would get in the leading block in \eqref{eq:MAC-dispersion} for $\mu=1$.

For the third and last step, we suggest a three-dimensional generalization of the mass stencil in \eqref{eq:JSS_sten}.
The generalized stencil we suggest is a 19-point stencil (and not a 27-point stencil), to keep the analogy to the two-dimensional case accurate.
In two dimensions, the 9-point mass stencil in \eqref{eq:JSS_sten} is comprised of the center of the stencil with the weight $b$, locations 
with one coordinate offset from the center, dividing the weight $c$ 
between them, and locations with two coordinates offset from the center of the stencil, dividing the remaining $1-b-c$ between them.
Based on this understanding, we define a general 19-point stencil for the mass as
\begin{equation}\label{eq:Mass-19-pt-sten}
M^{b,c} =
\left[
\begin{bmatrix}
 & \frac{1-b-c}{12} & \\[0.5em]
\frac{1-b-c}{12} & \frac{c}{6} & \frac{1-b-c}{12} \\[0.5em]
 &  \frac{1-b-c}{12} &
\end{bmatrix}
\begin{bmatrix}
\frac{1-b-c}{12} & \frac{c}{6} & \frac{1-b-c}{12}  \\[0.5em]
\frac{c}{6} & b & \frac{c}{6} \\[0.5em]
\frac{1-b-c}{12}  & \frac{c}{6} & \frac{1-b-c}{12}
\end{bmatrix}
\begin{bmatrix}
 & \frac{1-b-c}{12} & \\[0.5em]
\frac{1-b-c}{12} & \frac{c}{6} & \frac{1-b-c}{12} \\[0.5em]
 &  \frac{1-b-c}{12} &
\end{bmatrix}
\right].
\end{equation}
We observe that the weights $[b,c]=[1/2,1/2]$ lead to the mass stencil \eqref{eq:stencilMass3D}.
Finally, a GMAC discretization $\mathcal{H}^{[a,b,c]}$ to the saddle-point system \eqref{eq:elasticMixed} is defined according to \eqref{eq:MAC-dispersion}.

\section{Elastic dispersion correction (EDC): simultaneous correction for shear and pressure waves}\label{sec:lame}

In this section we present another main contribution:
a general framework for elastic dispersion correction (EDC).
The process we describe below enables us to correct shear and pressure wavenumbers simultaneously by shifting $\omega$ and $\lambda$.
Together with the GMAC scheme presented in Section \ref{sec:elastic}, this shifting method enables a further dispersion correction and completes the extension of any compact dispersion correction method from the acoustic to the elastic Helmholtz equation.

Some acoustic dispersion-correction methods are based on the idea of
using $k^*\neq k$ to discretize the acoustic Helmholtz equation \eqref{eq:acousitcHelm}.
Applying this idea in a naive way to the elastic Helmholtz equation in mixed formulation \eqref{eq:elasticMixed} reads:
replace the angular frequency by $\omega^*\neq \omega$.
Nevertheless, the number of grid points per \emph{shear} wavelength $G_s$ is not equal to the number of grid points per \emph{pressure} wavelength, $G_p$.
In the context of acoustic Helmholtz, different numbers of grid points per wavelength require different adapted wavenumbers $k^*$, even when using the same stencil weights (see, e.g., \cite{cocquet2024asymptotic}).
Hence, in the context of elastic Helmholtz, naive application of $\omega^*\neq\omega$ 
may correct the shear waves while creating misalignment in the pressure waves.

We suggest a mechanism to impose these different adapted frequencies to the shear and pressure waves.
Let $t_p$ be the constant corresponding to the correction of pressure waves, and $t_s$ the constant corresponding to the correction of shear waves.
That is, for the shear and pressure wavenumbers $k_p$ and $k_s$ from \eqref{eq:ContVelocities}, we define $t_p$ and $t_s$ to be the optimal corrections for the corresponding acoustic Helmholtz equations \eqref{eq:acousitcHelm}.
Namely, $t_s$ is chosen such that $k_s^* = t_s k$ is an optimal discrete wavenumber to approximate the acoustic Helmholtz equation with a continuous wavenumber $k_s$, and similarly for $t_p$.

The first step of correction is setting $\omega^* = t_s \omega$.
This step sets $k_s^* = \frac{t_s\omega}{V_s} = t_sk_s$, inevitably setting
$k_p^* = t_sk_p,$ which is not the desired correction.
Our aim in the second step is to replace $t_s$ by $t_p$ in the correction for the pressure waves, so that setting
$k_p^{**} = \frac{t_p}{t_s}k_p^*$
fixes the wrong correction.
To achieve this, we modify the Lam{\'e} coefficient $\lambda,$ which determines the pressure velocity only, by another constant.
We look for $s$ that will satisfy
\begin{equation}\label{eq:s_eq}
k_p^{**} = \frac{\omega^*}{V_p^*}=
\frac{\omega^*\sqrt{\rho}}{\sqrt{\lambda s + 2\mu}} = \frac{t_p}{t_s}\frac{\omega^*\sqrt{\rho}}{\sqrt{\lambda+2\mu}}
= \frac{t_p}{t_s}\frac{\omega^*}{V_p} =  \frac{t_p}{t_s}k_p^*.
\end{equation}
The solution to the third equality in \eqref{eq:s_eq} is 
\begin{equation}\label{eq:s_def}
s = \frac{t_s^2}{t_p^2} + \left(\frac{t_s^2}{t_p^2}-1\right)\frac{2\mu}{\lambda}.
\end{equation}
and placing $\lambda^* = s\lambda$ completes the correction process.

We summarize \textbf{our EDC process} in the following steps:
\begin{enumerate}
\item Replace $\omega$ by $\omega^* = t_s\omega$ where $t_s$ is shift for optimal dispersion correction for $G_s$.\;
\item Calculate $s$ by \eqref{eq:s_def} using $t_p,$ the optimal shift for  dispersion correction for $G_p$.
\item Replace $\lambda^*=s\lambda$.
\end{enumerate}
Note that the use of this algorithm requires knowledge of the appropriate shifting constants for the acoustic Helmholtz equation with shear and with pressure wave velocity.
These shifting constants can be either optimized by sampling symbols in the Fourier space or taken based on known values for acoustic dispersion correction methods.
For example, one can use specific shifts from \cite{cocquet2017finite} or an asymptotic closed-form formula for the shift, proved in \cite{cocquet2024asymptotic}, that depends on the number of grid points per wavelength.
The process presented here can be seen as a reduction from existing dispersion-correction methods for the acoustic Helmholtz equation to the elastic one.
When the Poisson ratio and the corresponding $G_p$ are sufficiently large, a correction is not required to the pressure waves.
In such a case, setting $t_p=1$ in \eqref{eq:s_def} suffices.
Yet, even in this case, one has to reverse the misalignment caused by the shear wave correction by setting $\lambda^*=s\lambda$.


\section{Symbol analysis: dispersion, dissipation and polarization}\label{sec:analysis}


In this section, we first recall the definition of numerical dispersion in the scalar case and generalize it to a dispersion analysis framework for shear and pressure waves, based on a discretization of the saddle-point formulation \eqref{eq:elasticMixed}.
Second, we recall the definition of numerical dissipation and shear mode separation and prove that our GMAC discretization \eqref{eq:MAC-dispersion} does not produce numerical dissipation (despite being non-symmetric), and that the two shear modes in three dimensions always coincide.
Finally, we discuss polarization error, another source of error relevant only to vector equations, which we need to consider here.
We recall the definition of pressure polarization error and suggest an analogous definition for shear polarization error, which we later use in the numerical examples in Section \ref{sec:numerical}.

\subsection{Numerical dispersion for scalar equations}\label{subsec:dispersion_scalar}

Roughly speaking, \emph{numerical dispersion} describes the difference between the continuous and discrete wave velocity.
First, we define the set of values in different propagation directions.

\begin{definition}[dispersion relation, scalar equations]\label{def:disp_rel}
Let $A$ be an operator (continuous or discrete), over a space of dimension $d$.
The \emph{dispersion relation} of $A$ is the kernel of the symbol:
\begin{equation}\label{eq:disp_rel}
\left\{\xi\in\mathbb{R}^d \mid \widetilde{A}(\xi) = 0\right\}
\end{equation}
where $\widetilde{A}$ is the Fourier symbol of $A$.
\end{definition}

For the (scalar) continuous Helmholtz equation \eqref{eq:acousitcHelm}, the symbol is calculated by applying the continuous operator to a plane wave $e^{i\xi\cdot x}$,
 yielding the dispersion relation $\{\xi\in\mathbb{R}^d \mid \|\xi\|^2=k^2\}$,
that is, a circle of radius $k$.
For a discrete operator defined as a convolution against a stencil, representing a scalar equation, the symbol is calculated using the stencil weights, e.g., in two dimensions,
\begin{equation}\label{eq:general_stencil}
S = \begin{bmatrix}
s_{-1,1} & s_{0,1} & s_{1,1} \\
s_{-1,0} & s_{0,0} & s_{1,0} \\
s_{-1,-1} & s_{0,-1} & s_{1,-1}
\end{bmatrix}
\quad \Rightarrow \quad
\widetilde{A}_h(\theta) = \sum s_{l,m} e^{i (l\theta_1+m\theta_2)},
\end{equation}
where $\theta$ is the dimensionless Fourier variable $\theta = \xi h$,
see \cite{cocquet2017finite}.
When we compare the discrete and continuous phase velocities via the dispersion relation, we translate the continuous case to the dimensionless variable $\theta$, and $\|\xi\|=k$ becomes $\|\theta\|=kh$, yielding a circle (or sphere) of radius $r = kh$.
It gives rise to the following definition:

\begin{definition}[relative dispersion error]\label{def:disp_err}
Let $A$ be a continuous operator and let $A_h$ be its discretization.
Let $\phi$ be a propagation direction (in three dimensions $\phi = [\phi_{azimuthal},\phi_{polar}]$) in the Fourier space.
Let $r=kh$ and let $r_{num}$ be the radius for which the symbol $\widetilde{A}_h(\theta)$, restricted to propagation direction $\phi$, switches sign (assuming it happens only once).
The relative dispersion error is defined (see, e.g., \cite{jo1996optimal}) as
\begin{equation}\label{eq:disp_err}
e(\phi) = \frac{r}{r_{num}}-1.
\end{equation}
Notice that since the radius is proportional to the inverse of the wave speed, this is the difference of wave speeds, normalized.
\end{definition}

Note that the symbol of the discrete operator is generally not a radial function, so the dispersion error depends on the propagation direction.
Dispersion correction methods aim at reducing the error defined in Definition \ref{def:disp_err}, although a theoretical limitation is that this error cannot be completely eliminated when the dimension is higher than 1.
In the next subsection, we extend these definitions to vector equations, particularly, to the elastic Helmholtz equation in mixed formulation \eqref{eq:elasticMixed}.

\subsection{Numerical dispersion for saddle-point systems}\label{subsec:dispersion_saddle}

As defined in the previous subsection, the dispersion relation of a scalar operator is the kernel of its symbol.
In systems of equations, we have a matrix of symbols rather than a scalar symbol.
The dispersion relation in the vector case is the set of $\theta\in[-\pi,\pi]^d$ for which the vector plane wave $\bfv e^{i \bfx\cdot \theta}$ is a solution to the homogeneous system.
In the constant-coefficient scenario,
this is the set for which the determinant of the matrix of symbols is zero; see \cite{cocquet2024asymptotic}.
The following definition encapsulates it.

\begin{definition}[dispersion relation, vector equations]\label{def:disp_rel_vec}
Let $\mathcal A$ be a continuous or discrete operator representing a system of PDEs $\mathcal A \vec v = \vec f$.
Then its dispersion relation is
\begin{equation}\label{eq:disp_rel_vec}
\left\{ \theta\in[-\pi,\pi]^d \mid \det\left(\widetilde{\mathcal A}(\theta)\right) = 0 \right\}
\end{equation}
where $\widetilde{\mathcal A}(\theta)$ is the matrix of symbols of $\mathcal A$.
\end{definition}

For the elastic Helmholtz (vector) equation, since the equation admits two wavenumbers \eqref{eq:ContVelocities},
we split the discussion to shear and pressure dispersion relations.
In continuous space, the above-mentioned dispersion relations are circles (in two dimensions) or spheres (in three dimensions) of radii
\begin{equation}\label{eq:radii}
r_s = \omega\sqrt{\frac{\rho}{\mu}} = \frac{\omega}{V_s} \quad \text{and} \quad
r_p = \omega\sqrt{\frac{\rho}{\lambda + 2\mu}} = \frac{\omega}{V_p}.
\end{equation}

To analyze the discrete dispersion of the shear and pressure modes for the elastic Helmholtz equation in mixed formulation \eqref{eq:elasticMixed}, 
we first recall the (primal) Schur decomposition of generalized saddle-point systems; see \cite{benzi2005numerical}:
\begin{equation}\label{eq:schur_primal_decomp}
\mathcal{H} =
\begin{bmatrix}
A & B_1^T \\
B_2 & -C
\end{bmatrix}
 = \begin{bmatrix}
 I & -B_1^T C^{-1}\\
O & I
 \end{bmatrix}
 \begin{bmatrix}
 S_p & \\
  & -C
 \end{bmatrix}
 \begin{bmatrix}
 I & O \\
 -C^{-1}B_2 & I
 \end{bmatrix},
\end{equation}
where $S_p=A+B_1^T C^{-1} B_2$.
This decomposition holds for any discretization of \eqref{eq:elasticMixed}, but also for the matrix of symbols of both the discrete and the continuous operators.
In our case, the $C$ block of \eqref{eq:elasticMixed} is a diagonal matrix with $1/(\lambda+\mu)$ on its diagonal, so $\widetilde{C}$ is nonzero. 
When analyzing symbols, we assume spatial homogeneity, hence $\widetilde{C}$ does not depend on $\theta$. 
Hence, $\det(\mathcal{H}) = 0$ if and only if $\det(S_p)=0$.
We notice that $S_p$ is the elastic Helmholtz operator in grad-div formulation \eqref{eq:elasticHelmEquivalent}.
Thus, analyzing the dispersion relation of the mixed formulation \eqref{eq:elasticMixed} is equivalent to analyzing the dispersion relation of its primal Schur complement \eqref{eq:elasticHelmEquivalent}.

The discrete dispersion relation splits into shear and pressure curves (or surfaces), as in the continuous case.
However, unlike in the continuous case, neither is a perfect circle (or sphere).
To calculate the shear and pressure radii in the Fourier space, we first pick a propagation direction $\phi$.
Then we sample the symbol of the discrete operator over the ray of direction $\phi$ in $[-\pi,\pi]^d$.
Next, we apply eigendecomposition to the resulting matrix of symbols.
We identify the pressure mode as the eigenvalue whose eigenvector is closest to the propagation direction. 
The remaining eigenpairs correspond to the shear modes.
Then we observe that at some distance from the origin in $[-\pi,\pi]^d$ the pressure eigenvalue vanishes; we call this distance $r_p^{num}$, and similarly we define $r_s^{num}$.
Now, the definition of shear and pressure dispersion errors is directly generalizable from Definition \ref{def:disp_err}.

\subsection{Proof of absence of numerical dissipation}\label{subsec:no_dissipation}

Another form of numerical error can occur in the case of a vector equation with a non-symmetric discretization.
\emph{Numerical dissipation} manifests as artificial attenuation, represented by the imaginary part of the symbolic eigenvalues.
In scalar equations in the frequency domain, discretized by real symmetric stencils, there is evidently no numerical dissipation.
Nevertheless, the absence of numerical dissipation is not straightforward when a vector equation is discretized by a non-symmetric block structure: even when each block's stencil is symmetric, the matrix of symbols might have non-real eigenvalues.
This gives rise to the following definition:

\begin{definition}[numerical dissipation]\label{def:dissipation}
Let $\mathcal{H}_h,$ $\lambda_p$ and $\lambda_s$ be as in Definition \ref{def:polarization-s}.
Then the numerical dissipation, or the relative dissipation error, for the pressure and shear modes is given by
\begin{equation}\label{eq:dissipation}
e_p^{dissipation}(\theta) = \frac{|\text{Im}(\lambda_p(\theta))|}{|\lambda_p(\theta)|} ,
\quad \text{and} \quad
e_s^{dissipation}(\theta) = \frac{|\text{Im}(\lambda_s(\theta))|}{|\lambda_s(\theta)|}
\end{equation}
respectively.
\end{definition}

In the next theorem, we prove that in our case, there is no numerical dissipation.

\begin{theorem}\label{thm:no_dissipation}
Let $\mathcal{H}_h$ be a GMAC discretization  \eqref{eq:MAC-dispersion} of the elastic Helmholtz equation in mixed formulation \eqref{eq:elasticMixed} with constant Lam{\'e} coefficients $\lambda$ and $\mu$ and constant density $\rho$.
Then there is no numerical dissipation; that is, the eigenvalues of the matrix of symbols are real.
\end{theorem}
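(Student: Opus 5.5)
Throughout, the claim is read as a statement about the eigenvalues of the primal Schur complement symbol, since that is where the pressure and shear modes of Definition~\ref{def:dissipation} live. Decomposition \eqref{eq:schur_primal_decomp} applies to the matrix of symbols, and $\widetilde C = 1/(\lambda+\mu)$ is a nonzero real constant. The block $-C$ contributes only the real eigenvalue $-1/(\lambda+\mu)$. The relevant modes are the eigenvalues of
\[
\widetilde S_p(\theta) = \widetilde A(\theta) + (\lambda+\mu)\,\widetilde{B_1^T}(\theta)\,\widetilde{B_2}(\theta).
\]
The plan is to show that $\widetilde S_p(\theta)$ is similar to a real symmetric matrix, so its eigenvalues are real.

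First I compute the symbols of the staggered first derivatives. The standard half-step derivative $(\partial_{x_j})_{h/2}$ has symbol $\frac{2i}{h}\sin(\theta_j/2)$, up to the phase factors from the staggering between faces and cell centers. Those phase factors can be removed by a diagonal unitary similarity, i.e.\ by choosing plane waves centered at the respective grid locations, and such a similarity does not change the eigenvalues. The spread derivatives \eqref{eq:dxSpread} and \eqref{eq:del_spread} are the standard derivative in $x_j$ convolved with a symmetric averaging stencil in the transverse directions. In 2D this stencil is $\frac14[1,2,1]$; in 3D it is the corresponding 9-point average. Hence
\[
\widetilde{(\partial_{x_j})^a_{h/2}} = \widetilde{(\partial_{x_j})_{h/2}}\cdot w_j(\theta),
\]
where $w_j(\theta)$ is a real, strictly positive weight. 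In 2D, for example, $w_1 = a + (1-a)\cos^2(\theta_2/2)$, which is positive for $a\in[0,1]$; I will note that the positivity assumption is needed for this step. Writing $d_j = \frac{2}{h}\sin(\theta_j/2)$, the gradient symbol is $i\,W d$ with $W=\mathrm{diag}(w_j)$, and the divergence symbol is $i\,d^T$.

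Next I assemble the pieces. The Laplacian block $-\vec\grad_h^T\vec\grad^a_h$ acts componentwise with the same scalar symbol $\sum_j w_j d_j^2$, which is real; this can be checked directly from the symmetric stencil \eqref{eq:Lap-19-pt-sten}. The mass $M^{b,c}$ is a symmetric real stencil, so its symbol is real. Hence $\widetilde A = \alpha(\theta) I$ with $\alpha$ real. The coupling term is, up to sign and the factor $(\lambda+\mu)$, the rank-one matrix $W d\, d^T$ (or $d\,d^T W$, depending on which side carries $a$). So
\[
\widetilde S_p = \alpha I + \beta\, W d d^T, \qquad \beta\in\mathbb R.
\]
This matrix is non-symmetric, but $W^{-1/2}\widetilde S_p W^{1/2} = \alpha I + \beta (W^{1/2}d)(W^{1/2}d)^T$ is real symmetric. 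Therefore all eigenvalues are real. Explicitly, they are $\alpha$ with multiplicity $d-1$ (the shear modes) and $\alpha+\beta\, d^TWd$ (the pressure mode).

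The main obstacle is the bookkeeping of the staggered-grid phases, together with confirming that the spread operator factors exactly as the standard derivative times a real positive transverse average in 3D. I would verify the 3D case first by factoring \eqref{eq:del_spread} as $\frac{1}{h}[-1\ *\ 1]$ in $x_1$ tensored with $\frac18$ of the 9-point stencil with center weight $4$ in the $(x_2,x_3)$ plane. Its symbol is $\frac12\big(\cos\theta_2+\cos\theta_3\big)+\ldots$, which should be nonnegative on $[-\pi,\pi]^2$. If $w_j$ could vanish or change sign, I would fall back on a direct argument: the nonzero eigenvalue of a rank-one perturbation is $\alpha+\beta\,d^TWd$, which is real regardless, and the remaining eigenvalues equal $\alpha$.
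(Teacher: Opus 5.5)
Your proof is correct, but its main line differs from the paper's. The paper never computes the symbols. It observes that $\widetilde A$ is a real scalar matrix, and that every nonzero eigenvalue of the rank-one coupling $\tilde B_1^T\tilde B_2$ is also an eigenvalue of $\tilde B_2\tilde B_1^T$. The latter is the symbol of a symmetric scalar Laplacian stencil, hence real. That is exactly your fallback, and it covers every real $a$ with no sign condition.

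Your main route instead factors each spread derivative as the standard one times a transverse average $w_j(\theta)$, then symmetrizes $\widetilde S_p$ with $W^{1/2}$. This needs $w_j>0$ for all $\theta$, which holds exactly when $a>0$, so your claim of positivity on $[0,1]$ fails at the endpoint $a=0$. In 2D, $w_1$ vanishes at $\theta_2=\pi$. In 3D, the transverse stencil is the 5-point average $\tfrac18$ (center $4$, four neighbors $1$), not a 9-point one. Its symbol is $\tfrac12\big(\cos^2(\theta_2/2)+\cos^2(\theta_3/2)\big)$, which vanishes at $\theta_2=\theta_3=\pi$. So for $a\le 0$ the fallback is genuinely needed.

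In exchange, your route gives more than the paper's:
\begin{itemize}
\item diagonalizability and explicit eigenvalues, with the shear value $\alpha$ of multiplicity $d-1$, which also yields Theorem~\ref{thm:no_shear_split};
\item eigenvectors orthogonal in a $W$-weighted inner product;
\item Hermitian form of the full saddle-point symbol: conjugating $\widetilde{\mathcal H}(\theta)$ by $\mathrm{diag}(W^{1/2},1)$ makes it Hermitian, so for $W>0$ the statement holds literally for the matrix of symbols, not only for $\widetilde S_p$.
\end{itemize}

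The last point matters because your remark that the $-C$ block ``contributes only the real eigenvalue'' is not right. The factorization \eqref{eq:schur_primal_decomp} is not a similarity. The non-shear eigenvalues of $\widetilde{\mathcal H}$ are instead the roots of $(\alpha-z)\big(\tfrac{1}{\lambda+\mu}-z\big)=d^TWd$. The paper makes the same reduction to $\widetilde S_p$, so this is not a gap relative to its proof.
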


\begin{proof}
The shear and pressure eigenpairs of $\tilde{\mathcal{H}}(\theta)$ are determined by its primal Schur complement, $\widetilde{S}_p$; see Subsection \ref{subsec:dispersion_saddle}.
In a homogeneous medium, it takes the form
\begin{equation}\label{eq:schur-primal}
\widetilde{S}_p = \tilde{A} + \tilde{B}_1^T \tilde{C}^{-1}\tilde{B}_2
= \tilde{A} -(\lambda+\mu) \tilde{B}_1^T\tilde{B}_2
= -\mu\tilde{\vec\grad}_h^T\tilde{\vec\grad}_h^a +\rho \widetilde{M}^{b,c}-(\lambda+\mu) (\tilde{\div})_h^T (\tilde{\div})_h^a.
\end{equation}

First, we observe that for every $a,b,c\in\mathbb{R}$ the stencils for the (vector) Laplacian $\tilde{\vec\grad}_h^T\tilde{\vec\grad}_h^a$ and the mass $M^{b,c}$ are symmetric and hence the symbol of the leading block, $\tilde{A}(\theta)$, is real for every $\theta$.
Hence, it remains to prove that the eigenvalues of this grad-div operator are real.
Let $\lambda$ be an eigenvalue of $\tilde{B}_1^T\tilde{B}_2$.
Then either $\lambda \!=\! 0 $ or $\lambda$ is an eigenvalue of $\tilde{B}_2\tilde{B}_1^T$.
Indeed, let $\bfv$ be an eigenvector of $\tilde{B}_2\tilde{B}_1^T$ corresponding to an eigenvalue $\lambda\neq0$ and denote            $\bfw = \tilde{B}_1^T\bfv$.
Then $\tilde{B}_2\tilde{B}_1^T \bfw = \lambda \tilde{B}_2 \bfv = \lambda \bfw$.
We recall that the stencils for the first derivatives \eqref{eq:del_spread} were chosen such that the $a$-spread divergence applied to the standard gradient will yield the $a$-spread Laplacian of 9 points \eqref{eq:JSS_sten} in two dimensions or the Laplacian of 19 points \eqref{eq:Lap-19-pt-sten} in three dimensions.
Since $\tilde{B}_2\tilde{B}_1^T$ is the symbol of the scalar Laplacian discretized by a symmetric stencil, $\lambda\in\mathbb{R}$. This completes the proof that the GMAC discretization \eqref{eq:MAC-dispersion} yields no numerical dissipation.
\end{proof}


\subsection{Proof of absence of shear-mode separation}\label{subsec:no_shear_split}

Another form of error applies only to vector equations in three or more dimensions. 
In the elastic case in three dimensions, there are two shear eigenvalues, and \emph{shear mode separation} is present when they are not equal. The following definition refers to the existence of two distinct shear modes. We prove in this subsection that this case does not happen in our discretization.

\begin{definition}[shear mode separation]\label{def:shear_split}
Let $\mathcal{H}_h$ be a discretization of the left-hand side from \eqref{eq:elasticHelmEquivalent} in three dimensions.
Let $\lambda_p$ be defined as in Definition \ref{def:polarization-p} and let $\lambda_{s_1}$ and $\lambda_{s_2}$ be the two remaining eigenvalues.
If $\lambda_{s_1}\neq\lambda_{s_2}$ we say there is shear-split of size $\lambda_{s_2}-\lambda_{s_1}$.
\end{definition}

In the next theorem, we prove that in our case, there is no shear mode separation.

\begin{theorem}\label{thm:no_shear_split}
Let $\mathcal{H}$ be a GMAC discretization \eqref{eq:MAC-dispersion} of the elastic Helmholtz equation in mixed formulation \eqref{eq:elasticMixed} in three dimensions with constant Lam{\'e} coefficients $\lambda$ and $\mu$, and constant density $\rho$.
Then there is no shear mode separation; that is, for every $\theta$, the two eigenvalues of the $3\times3$ matrix of symbols $\widetilde{S}_p(\theta)$ from \eqref{eq:schur-primal} corresponding to the shear wave velocity are equal.
\end{theorem}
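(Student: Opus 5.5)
Our approach is to write the symbol $\widetilde{S}_p(\theta)$ from \eqref{eq:schur-primal} explicitly as a scalar multiple of the identity plus a rank-one matrix. The eigenspace orthogonal (in the appropriate sense) to the rank-one part will then be two-dimensional and carry a single eigenvalue. We would carry this out in three steps.

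\textbf{Step 1 (diagonal blocks).} Work with Fourier symbols in the dimensionless variable $\theta$. Each displacement component $\bfu_j$ is located on a different face, but after the standard shift the symbol of $(\partial_{x_j})_{h/2}$ is $\frac{2i}{h}\sin(\theta_j/2)$. Denote by $g_j(\theta)$ the symbol of the $a$-spread derivative $(\partial_{x_j})^a_{h/2}$. The $(1-a)$ part \eqref{eq:del_spread} multiplies the standard two-point difference by a symmetric averaging in the other two directions, so
\[
g_j(\theta)=\frac{2i}{h}\sin(\theta_j/2)\,\alpha_j(\theta),
\]
where $\alpha_j$ is real. It is also important that the vector Laplacian block $-\mu\tilde{\vec\grad}_h^T\tilde{\vec\grad}_h^a$ is block-diagonal, with each diagonal entry equal to $-\mu$ times the symbol of the same scalar 19-point Laplacian \eqref{eq:Lap-19-pt-sten}. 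The same holds for the mass $\rho\widetilde M^{b,c}$, since $M^{b,c}$ is applied componentwise with identical weights. We need to check that the staggered location of each component does not change these symbols; this holds because the stencils are invariant under the grid shift, so only a phase factor appears and it cancels in the diagonal entries. Thus $\tilde A(\theta)=\beta(\theta)I_3$ with $\beta$ real.

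\textbf{Step 2 (grad-div block).} The term $-(\lambda+\mu)\tilde B_1^T\tilde B_2$ is the outer product $\bfd\,\bfw^T$, up to conjugation and sign conventions. Here $\bfd=(\tilde\partial_{x_j})_j$ comes from the standard divergence transposed, and $\bfw=(g_j)_j$ comes from the spread derivatives. This term is therefore a rank-one matrix, so
\[
\widetilde S_p(\theta)=\beta(\theta)I_3-(\lambda+\mu)\,\bfd\,\bfw^{*}.
\]
We will write the phases carefully to confirm that $\bfd$ and $\bfw$ enter exactly as a product of a column and a row vector.

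\textbf{Step 3 (conclusion).} Any vector $\bfv$ with $\bfw^{*}\bfv=0$ satisfies $\widetilde S_p\bfv=\beta\bfv$. For $\theta\neq0$ these vectors form a subspace of dimension at least two, so $\beta$ is an eigenvalue of geometric multiplicity at least two. The remaining eigenvalue is $\beta-(\lambda+\mu)\bfw^{*}\bfd$; by the argument of Theorem~\ref{thm:no_dissipation} it is real, and it is the pressure eigenvalue, since its eigenvector $\bfd$ is aligned with the propagation direction. Hence the two shear eigenvalues satisfy $\lambda_{s_1}=\lambda_{s_2}=\beta(\theta)$. When $\theta=0$, or in degenerate directions where $\bfw^{*}\bfd=0$, all three eigenvalues coincide with $\beta$, and the claim holds trivially.

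\textbf{Main obstacle.} The delicate point is Step 1: verifying that the staggered placement and the mass stencil give identical diagonal symbols for all three components, with no off-diagonal entries. The rank-one structure in Steps 2--3 is then straightforward linear algebra.
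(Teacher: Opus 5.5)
Your proposal is correct and follows essentially the same argument as the paper: $\widetilde A(\theta)$ is a scalar multiple of the identity and the grad-div term $\tilde B_1^T\tilde B_2$ is rank one, so its kernel (of dimension at least two) gives a double shear eigenvalue of $\widetilde S_p(\theta)$. Your version adds some detail that the paper only asserts: you justify why $\widetilde A$ is scalar, identify the kernel as the orthogonal complement of $\bfw$, and compute the pressure eigenvalue explicitly from the trace with eigenvector $\bfd$.
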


\begin{proof}
As shown in the proof of Theorem \ref{thm:no_dissipation}, for every $\theta$, if $\nu$ is an eigenvalue of $\tilde{B}_1^T(\theta)\tilde{B}_2(\theta)$ from \eqref{eq:schur-primal}, then either $\nu \!=\! 0$ or $\nu$ is an eigenvalue of $\tilde{B}_2(\theta)\tilde{B}_1^T(\theta)$.
Since $\tilde{B}_1^T(\theta)\tilde{B}_2(\theta)$ is a rank one matrix, the eigenvalue 0 has multiplicity 2.
Recall that $\widetilde{A}(\theta)$ is a scalar matrix with eigenvalue $\mu\widetilde{\Delta}_h(\theta)+\rho\omega^2\widetilde{M}^{b,c}(\theta)$ that corresponds to the shear mode.
Hence, for the matrix $\widetilde{S}_p(\theta)$, the eigenvalue $\nu$, corresponding to the shear mode, has multiplicity 2.
\end{proof}

\subsection{Definitions of polarization error}\label{subsec:polarization}

Another form of error that can appear in vector equations is \emph{polarization error}, which measures how much an eigenvector corresponding to a certain mode deviates from its physical direction.
The elastic Helmholtz equation describes pressure waves, which, in the continuous case, consist of particle motion aligned with the propagation direction, and shear waves in which the motion is perpendicular to the propagation direction.
In the discrete case, however, the pressure waves might deviate from the propagation direction and the shear waves might deviate from the orthogonal direction.
This gives rise to the following definition, which appears, e.g., in \cite{seriani2008dispersion}. 
\begin{definition}[Pressure polarization error, \cite{seriani2008dispersion}]\label{def:polarization-p}
Let $\mathcal{H}_h$ be a discretization of \eqref{eq:elasticHelmEquivalent} in two dimensions.
Let $\theta \in [-\pi,\pi]^d$ and denote the propagation direction corresponding to $\theta$ by $\hat\bfd = \frac{\theta} {\|\theta\|}$.
Let $\hat\bfv_p$ be the normalized eigenvector of the matrix of symbols $\widetilde{\mathcal A}_h(\theta)$ closest in angle to $\hat\bfd$.
Then the pressure polarization error in $\theta$ is
\begin{equation}\label{eq:polarization_pressure}
e^{polarization}_p(\theta) = \arccos\left(|\hat\bfd \cdot \hat\bfv_p|\right) = \arcsin\left( \|(I-\hat\bfd \hat\bfd^T)\hat\bfv_p\| \right),
\end{equation}
where $I$ is the $2\times2$ identity matrix.
\end{definition}
In \cite{seriani2008dispersion},  polarization (or polarity) error was defined in the context of a spectral element method, and the scheme was assumed to be symmetric.
As a result of the symmetry, the discrete shear and pressure modes were orthogonal, and hence their deviation from the corresponding physical directions was equal.
For this reason, it was enough to define polarization error for pressure waves only.
In our case, our scheme is non-symmetric, so the two senses of polarization may differ.
We therefore suggest the following similar definition.
\begin{definition}[Shear polarization error]\label{def:polarization-s}
Under the notations of Definition \ref{def:polarization-p}, 
let
$\hat\bfv_s$ be the remaining normalized eigenvector of $\tilde{\mathcal{H}}_h(\theta)$. 
Then the shear polarization error in $\theta$ is
\begin{equation}\label{eq:polarization_shear}
e^{polarization}_s(\theta) = \arcsin\left( |\hat\bfd \cdot \hat\bfv_s|\right).
\end{equation}
\end{definition}

We gave the last definition in two dimensions for simplicity.
In three dimensions, there can theoretically be two different shear polarization errors.
In the next section, we use the definitions above to generate numerical examples, typically by maximizing the error measures over $\theta\in[-\pi,\pi]^2$ in two dimensions or over $\theta\in[-\pi,\pi]^3$ in three dimensions.
Even when the maximum dispersion error is low, the maximum polarization error might still be high, indicating that the discrete scheme captures the phases but propagates them in the wrong directions.

\section{Dispersion examples}\label{sec:numerical}

In this section, we demonstrate the properties of our GMAC discretization \eqref{eq:MAC-dispersion} and our EDC process of Section \ref{sec:lame},
by sampling the symbol of the discrete system, derived in Section \ref{sec:analysis}.
In Subsection \ref{subsec:numerical2D}  we analyze the dispersion error as explained in Subsection \ref{subsec:dispersion_saddle} and compare it to existing dispersion correction methods for the elastic Helmholtz equation in two dimensions.
In Subsection \ref{subsec:numerical3D} we generate similar results in three dimensions, and also refer to the polarization error, defined in Subsection \ref{subsec:polarization}.

\subsection{Two-dimensional examples}\label{subsec:numerical2D}

In Fig. \ref{fig:disp_rel_HO_JSS} we display the pressure and shear dispersion relations for the GMAC discretization \eqref{eq:MAC-dispersion} in two dimensions, with and without EDC. We use two choices of weights,
\begin{itemize}
\item \textbf{GMAC-ST:} Using \eqref{eq:MAC-dispersion} with weights $[a,b,c] = [2/3,2/3,1/3]$, which is our elastic discretization combined with the weights of the 4th-order discretization of the acoustic Helmholtz equation suggested by Singer and Turkel in \cite{singer1998high}.
\item \textbf{GMAC-JSS:} Using \eqref{eq:MAC-dispersion} with the weights $[a,b,c] = [0.5461,0.6248,0.3752]$, which is our elastic discretization combined with the weights of the acoustic dispersion correction discretization suggested by Jo, Shin, and Suh in \cite{jo1996optimal}.
\end{itemize}
We use $\mu=\rho=1$ and $\lambda=10$, leading to a Poisson ratio of about $\sigma = 0.45$, and $G_s=4$ or $2.5$ grid points per shear wavelength.
The high Poisson ratio means pressure waves have significantly more grid points per pressure wavelength, $G_p = 13.84$ or $G_p = 8.65$, at which the phases are already well aligned. 
Therefore, we use $t_p=1$ in all of the cases (with the corresponding correction for $\lambda$), and for $t_s$, we choose a value that nearly minimizes the dispersion error over all propagation directions.

\begin{figure}
\begin{center}	\newcommand{\image}[1]{\includegraphics[width=0.32\linewidth]{./#1}}
    \subfigure[\parbox{3.5cm}{\centering \footnotesize GMAC-ST, no EDC \\ $G_s=4$}]{\image{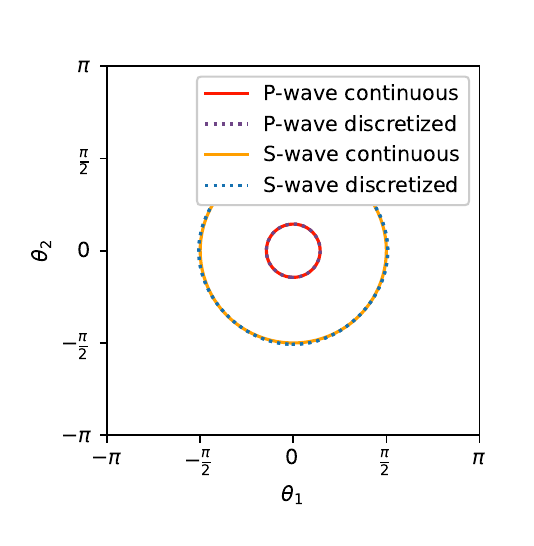}\label{fig:disp_rel_4g_HO}}
    \subfigure[\parbox{3.5cm}{\centering \footnotesize GMAC-ST, with EDC\\ $t_s=0.993$, $G_s=4$}]{\image{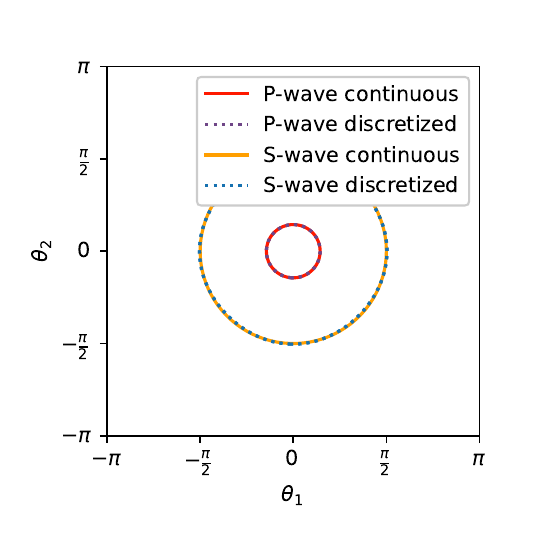}\label{fig:disp_rel_4g_HO_kstar}}
    \subfigure[\parbox{3.5cm}{\centering \footnotesize
GMAC-JSS, no EDC \\ $G_s=4$}]{\image{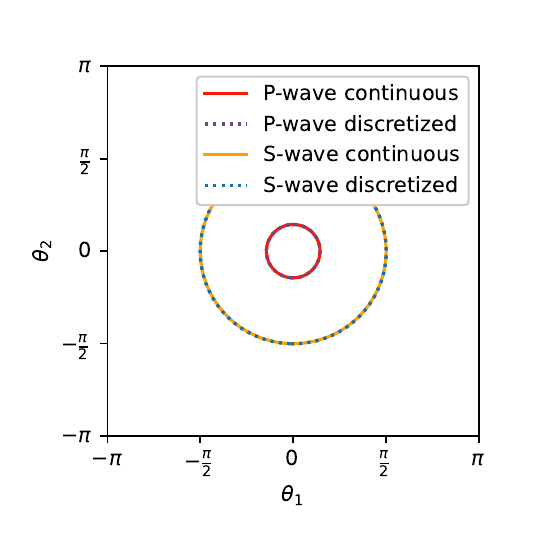}\label{fig:disp_rel_4g_JSS}}
    \subfigure[\parbox{3.5cm}{\centering \footnotesize GMAC-ST, no EDC \\ $G_s=2.5$}]{\image{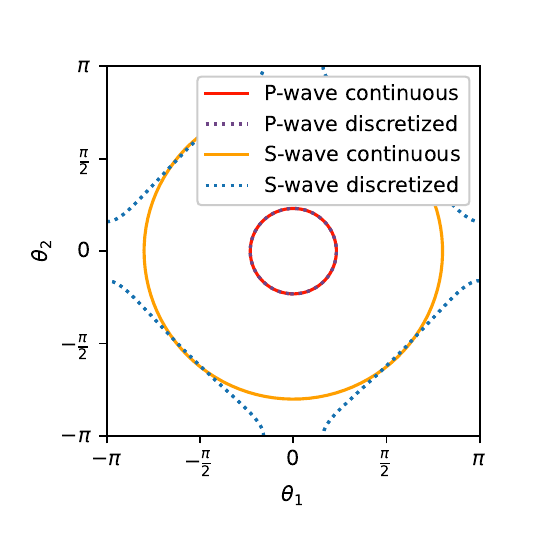}\label{fig:disp_rel_2p5g_HO}}
    \subfigure[\parbox{3.5cm}{\centering \footnotesize GMAC-ST, with EDC \\ $t_s=0.934$, $G_s=2.5$}]{\image{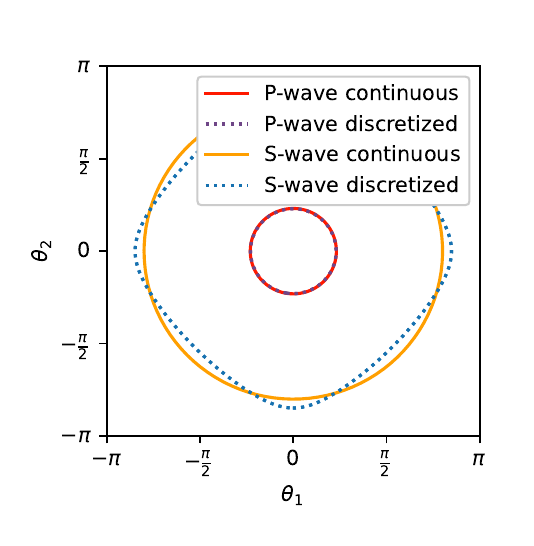}\label{fig:disp_rel_2p5g_HO_kstar}}
    \subfigure[\parbox{3.5cm}{\centering \footnotesize GMAC-JSS, no EDC \\ $G_s=2.5$}]{\image{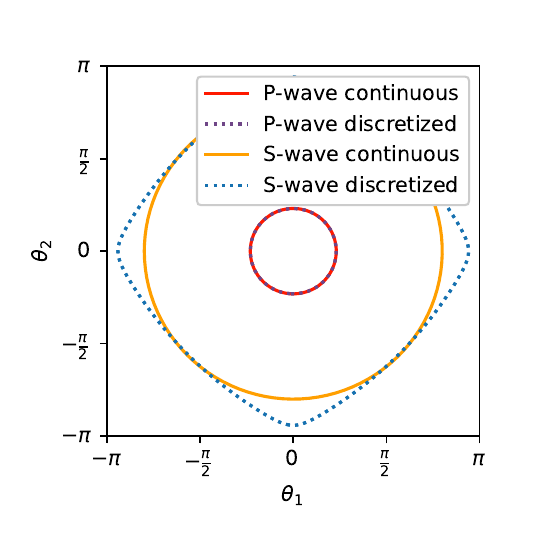}\label{fig:disp_rel_2p5g_JSS}}\\
\end{center}
\caption{Dispersion relations for the elastic Helmholtz equation discretized by different GMAC weights, with or without EDC, at Poisson ratio $\sigma = 0.45$.
}\label{fig:disp_rel_HO_JSS}
\end{figure}

We observe that for $G_s=4$ there is no significant difference between the methods, see Figs. \ref{fig:disp_rel_4g_HO}, \ref{fig:disp_rel_4g_HO_kstar} and \ref{fig:disp_rel_4g_JSS}.
We note that all of these methods have better alignment of phases compared to standard MAC, which  is not presented here, and hence a choice of weights for the GMAC is enough to correct the dispersion in this case.
However, for $G_s=2.5$ the EDC process becomes meaningful:
in Fig. \ref{fig:disp_rel_2p5g_HO} the shear dispersion relation corresponding to the GMAC-ST discretization method has four connected components.
The GMAC-JSS discretization method yields a better dispersion relation; see Fig. \ref{fig:disp_rel_2p5g_JSS}.
EDC can further reduce the dispersion error, as seen in Fig. \ref{fig:disp_rel_2p5g_HO_kstar}.

\begin{figure}
\begin{center}
	\newcommand{\image}[1]{\includegraphics[width=0.32\linewidth]{./#1}}
    \subfigure[\footnotesize Without pressure correction]{\image{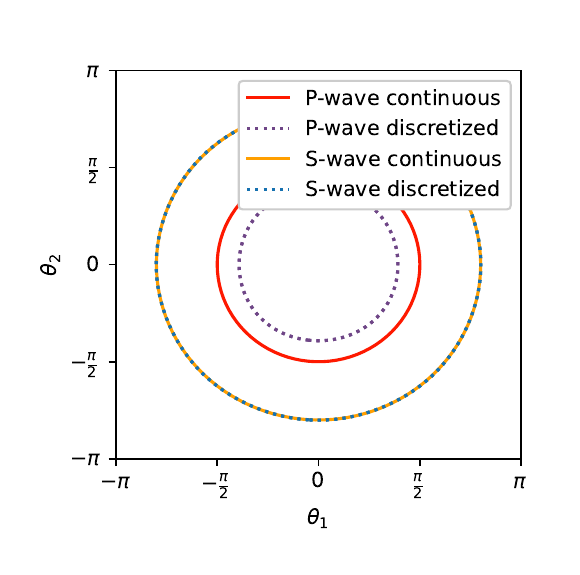}\label{fig:disp_rel_2p5g_CGX_no_pressure_corr}}
    \subfigure[\footnotesize Assuming $t_p=1$ in \eqref{eq:s_def}]{\image{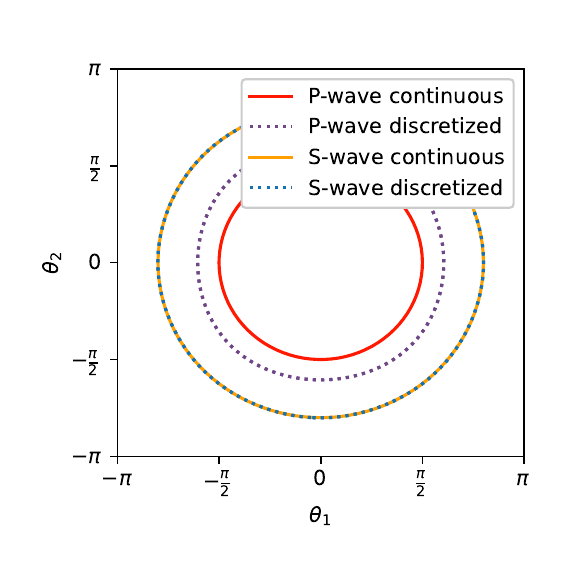}\label{fig:disp_rel_2p5g_CGX_basic_pressure_corr}}
   \subfigure[\footnotesize With $t_p=0.72186 $ in \eqref{eq:s_def}]{\image{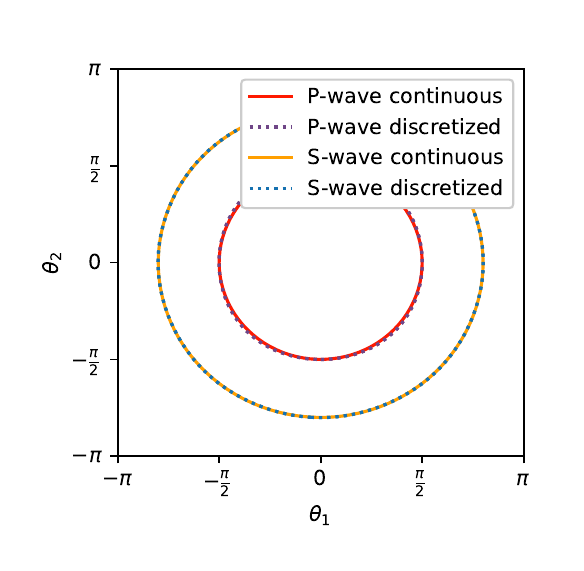}\label{fig:disp_rel_2p5g_CGX}}\\
\end{center}
\caption{Dispersion relations for shear and pressure waves, with  $G_s=2.5$ and $G_p=4$ (Poisson ratio $\sigma = 0.18$), achieved by partial or full application of our EDC process: 
on the left, just the first step is applied with $a,b,c$ and $t_s$ as reported in \cite{cocquet2017finite} for acoustic Helmholtz with $G=G_s$.
In the center, applying the full EDC process, assuming $t_p=1$.
On the right, applying the full process where $t_p$ is the optimal parameter reported in \cite{cocquet2017finite} for $G=G_p$.
}\label{fig:disp_rel_CGX}
\end{figure}

The following experiment, depicted in Fig. \ref{fig:disp_rel_CGX}, emphasizes the need for all of the steps of the EDC process from Section \ref{sec:lame}, and the necessity of choosing $t_p\neq 1$ in \eqref{eq:s_def} in some scenarios.
In \cite{cocquet2017finite}, the weights $[a,b,c]$ and a real shifting factor $t$ were optimized in the following sense: 
The acoustic Helmholtz equation with wavenumber $k^*=tk$, discretized by the stencil \eqref{eq:JSS_sten} with $[a,b,c]$, has the best phase alignment with the continuous equation with wavenumber $k$, summing the error over propagation directions.
We re-performed the optimization for each number of grid points per wavelength $G$, and report some optimal values there.
Note that the resulting weights $[a,b,c]$ without a shifted wavenumber do not necessarily form a valid approximation of the equation, as the shifting parameter and the stencil weights were optimized together.
To adapt this method to the elastic Helmholtz equation, we use our GMAC discretization \eqref{eq:MAC-dispersion} for some $[a,b,c]$ values reported in \cite{cocquet2017finite}, and apply EDC with $t_p$, the reported value for the acoustic case with $G=G_p$, and $t_s$, the reported value for $G=G_s$.

In Fig. \ref{fig:disp_rel_CGX} we take $G_s=2.5$ and Poisson ratio $\sigma=0.18$, so that $G_p=4$.
The values reported in \cite{cocquet2017finite} for $G=2.5$ are
$[a_1,b_1,c_1] = [0.5324,1.0553,0.0003]$ and $t_1=0.72186$ and for $G=4$,
the reported optimal parameters are $[a_2,b_2,c_2] = [0.6054,1.0532,0.0002]$ and $t_2 = 0.87725$.
Since, in the elastic case, the shear and pressure waves are coupled into a single (vector) equation, we have to choose one set of stencil parameters.
We therefore take $[a_1,b_1,c_1]$ which are optimal for $G_s$.
We set $t_s = 0.72186$ and $t_p = 0.87725$, where the latter is nearly optimal for the pressure correction since $a_1,b_1,c_1$ are relatively close to $a_2,b_2,c_2$.
We display in Fig. \ref{fig:disp_rel_CGX} the shear and pressure dispersion relations for three different partial versions of EDC:
\begin{itemize}
\item In Fig.  \ref{fig:disp_rel_2p5g_CGX_no_pressure_corr}, only the first step of the EDC process is performed: $\omega$ is replaced by $\omega^* = t_s\omega$, with $t_s = 0.72186$, but no special treatment addresses the pressure waves. 
We observe that the shear dispersion relation is very well aligned with the continuous one, while the pressure waves are far from alignment.
\item In Fig. \ref{fig:disp_rel_2p5g_CGX_basic_pressure_corr}, we apply the full EDC process, with $t_s = 0.72186$ and $t_p=1$ in \eqref{eq:s_def}. 
We observe that the shear waves remain well corrected, while the pressure-wave correction overshoots the target, yielding an error in the opposite direction. 
This results from the fact that the stencil weights $a_1$, $b_1$, and $c_1$, without any shifted wavenumber, do not form a valid approximation (and setting $t_p=1$ effectively cancels the shift in the pressure wavenumber).
\item In Fig. \ref{fig:disp_rel_2p5g_CGX} we depict the full EDC process, with the reported values $t_s = 0.72186$ and $t_p=0.87725$.
We observe nearly perfect alignment of both shear and pressure dispersion relations.
\end{itemize}
We note that for some applications, a method requiring re-optimizing multiple weights for every configuration might be too costly. 
However, the final experiment in Fig. \ref{fig:disp_rel_2p5g_CGX} demonstrates the ability of our EDC process to correct the dispersion of both shear and pressure waves in the elastic case, in the same efficiency as known methods do for the acoustic Helmholtz equation.

In Fig. \ref{fig:disp_err} we present curves of the ratio between discrete and continuous wave speeds $c_{num}/c_{cont}$ vs. $1/G_s$ for different propagation directions.
The density is $\rho=1$ and the Lam{\'e} coefficients are $\lambda=10$ and $\mu=1$, corresponding to Poisson ratio $\sigma = 0.4545$.
In Fig. \ref{fig:disp_err_vs_OneOverG_fourth_2D} we examine the GMAC-ST discretization without EDC.
In Fig. \ref{fig:disp_err_vs_OneOverG_Opt_2D} we apply the same discretization with EDC.
For each $1/G_s$ value we re-optimize $t_s$, and take $t_p=1$ in \eqref{eq:s_def}.
Because of the large Poisson ratio, the pressure requires no further correction.
For the shear waves, GMAC-ST with EDC is significantly more accurate than without it, yielding a $6\%$ error with as few as $2.5$ grid points per shear wavelength; see Fig. \ref{fig:disp_err_vs_OneOverG_Opt_2D}. This is compared to $>20\%$ in Fig. \ref{fig:disp_err_vs_OneOverG_fourth_2D}.
We repeat the two experiments for the GMAC-JSS discretization.
For $G_s=2.5$, we have nearly $15\%$ error without EDC, see Fig. \ref{fig:disp_err_vs_OneOverG_JSS}, compared to only $5\%$ error with EDC, see Fig. \ref{fig:disp_err_vs_OneOverG_JSS_Opt}.

\begin{figure}
\begin{center}	\newcommand{\image}[1]{\includegraphics[width=0.48\linewidth]{./#1}}
    \subfigure[\footnotesize GMAC-ST, no EDC]{\image{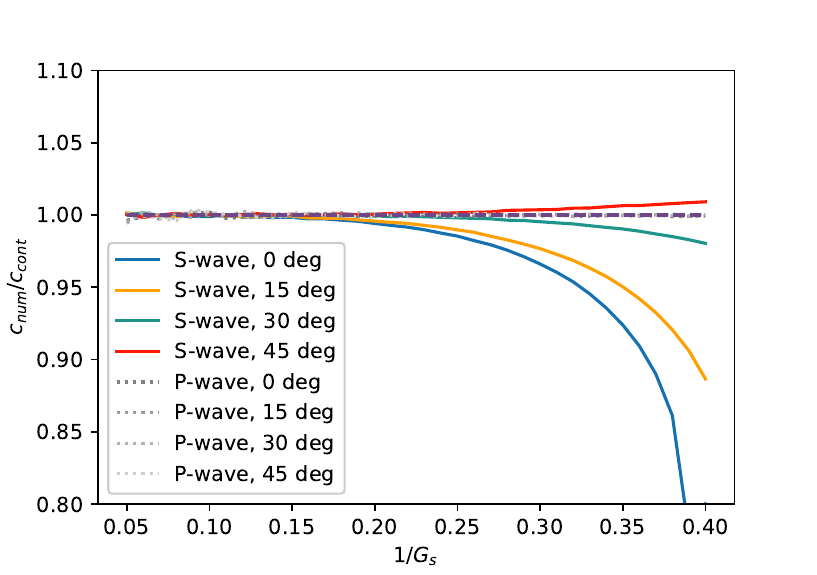}\label{fig:disp_err_vs_OneOverG_fourth_2D}}
   \subfigure[\footnotesize GMAC-ST, with EDC]{\image{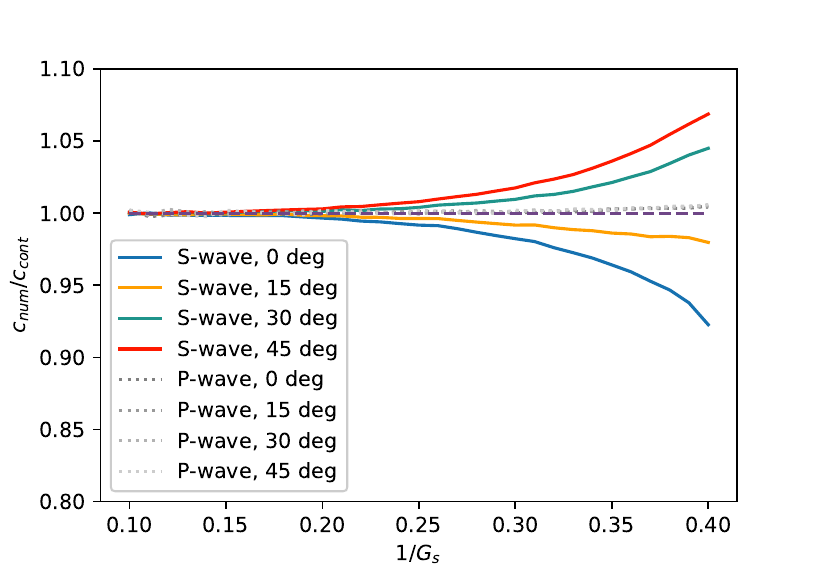}\label{fig:disp_err_vs_OneOverG_Opt_2D}}
   \subfigure[\footnotesize GMAC-JSS, no EDC]{\image{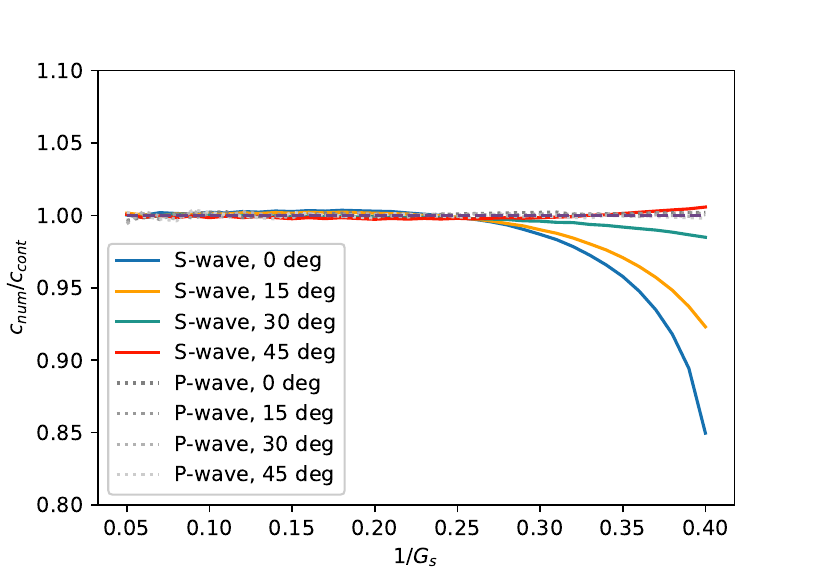}\label{fig:disp_err_vs_OneOverG_JSS}}
   \subfigure[\footnotesize GMAC-JSS, with EDC]{\image{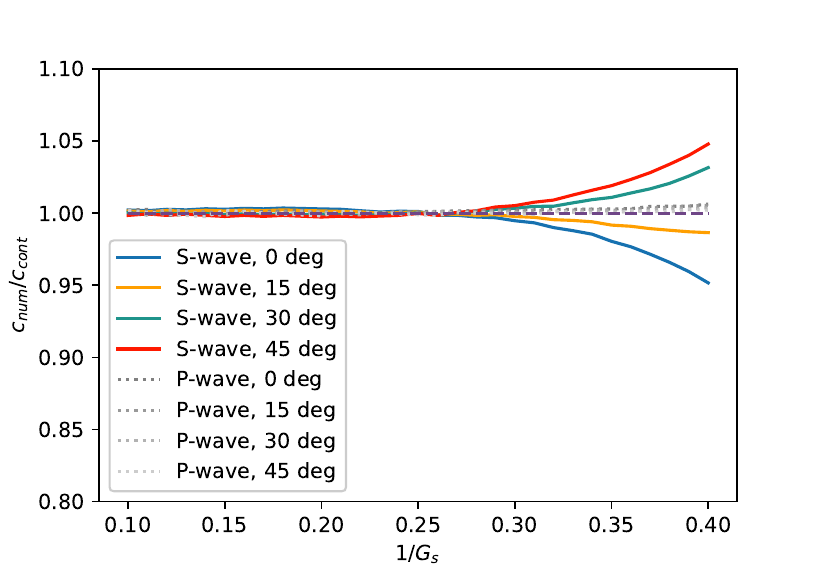}\label{fig:disp_err_vs_OneOverG_JSS_Opt}}
   \\
\end{center}
\caption{Phase velocity ratio vs. $1/G_s$ for different propagation directions for the elastic Helmholtz equation in two dimensions, discretized by GMAC-ST and GMAC-JSS.
}\label{fig:disp_err}
\end{figure}

For all the above-mentioned GMAC versions, we also measure numerical dissipation; see Definition \ref{def:dissipation}.
The maximal normalized numerical dissipation did not exceed $~8.5\cdot 10^{-17}$.
This value is numerically zero and reflects our implementation of symbols as complex exponents.
This aligns well with Theorem \ref{thm:no_dissipation},
which claims that our family of discretization schemes produces no numerical dissipation.

\paragraph*{Comparison to other dispersion correction methods}
By quantitatively measuring the results in Fig. \ref{fig:disp_err_vs_OneOverG_JSS_Opt}, we observe that only 
$G_s=3.27$ grid points per shear wavelength are needed to keep the maximal phase error below $1\%.$ 
For comparison, in \cite{gu201321}, the authors compare compact and non-compact FD schemes for elastic wave simulation and report that their compact 9-point scheme
requires $G_s=9$, their non-compact 25-point scheme
requires as few as $G_s=2.7$, and their 21-point scheme needs $G_s=3.3$ grid points per shear wavelength to
keep the dispersion error below $1\%$.
We also compare to the spectral element method of \cite{seriani2008dispersion}, which reports an error below 1\% for $G_s=4$.
That is, our GMAC-JSS with EDC forms a compact method that requires \emph{$\times2.75$ fewer grid points per shear wavelength} compared to an existing compact method, and is even \emph{competitive with non-compact methods}.

\subsection{Three dimensional examples}\label{subsec:numerical3D}

In this subsection, we present numerical experiments in three dimensions.
We use the following discretizations:
\begin{itemize}
\item \textbf{GMAC (ST analog):} Using \eqref{eq:MAC-dispersion} with the weights $[a,b,c] = [1/3,1/2,1/2]$, which is our elastic discretization applied with the weights of the acoustic fourth-order discretization from Subsection \ref{subsec:acoustic3D}.
\item \textbf{GMAC (JSS analog):} Using \eqref{eq:MAC-dispersion} with the weights $[a,b,c] = [0.06,0.37,0.63]$, which is our elastic discretization applied with the weights optimized to reduce numerical dispersion over propagation directions for $G_s>2.5$.
For each $G_s$ value, $a$ and $b$ were taken in the range $[0,1]$ with jumps of $0.01$, and the dispersion error \eqref{eq:disp_err} was calculated for three propagation directions: axis direction, face diagonal, and body diagonal.
To compute the radii in \eqref{eq:disp_err}, the symbolic shear and pressure eigenvalues from Section \ref{sec:analysis} were sampled along a ray in the given direction, with jumps of 0.001.
Setting $c=1-b$ (which is the case in the ST method and in the two-dimensional JSS method), the weights $a$ and $b$ were chosen to minimize the maximal error.
\end{itemize}

In Fig. \ref{fig:ball}, we depict the three-dimensional dispersion relation of the GMAC (ST analog), and observe a pattern similar to the two-dimensional dispersion relations of Fig. \ref{fig:disp_rel_2p5g_HO}.
In Fig. \ref{fig:max_polarization_vs_a} we show the maximal shear polarization error (see Definition \ref{def:polarization-s}) over all propagation directions, for different choices of weight $a$ for the first derivatives \eqref{eq:dxSpread} in two dimensions and \eqref{eq:del_spread} in three dimensions.
We report results for $G_s=2.5$, $[b,c] = [2/3,1/3]$ in two dimensions and $[b,c] = [1/2,1/2]$ in three dimensions.
Nevertheless, we observe that the polarization error does not depend on either $ G_s$ or the mass weights $b,c$.
The shear polarization error varies strongly with $a$.
For the smallest weight examined, $a=0.05$, the shear polarization error is about $25^\circ$ in two dimensions and $15^\circ$ in three dimensions, which makes such discretizations impractical. Interestingly, the optimal shear polarization error, about $3.2^\circ$ in two dimensions and $4.1^\circ$ in three dimensions, occurs near the weights corresponding to the high-order scalar Laplacian, $a=2/3$ in two dimensions and $a=1/3$ in three dimensions.
Hence, although these weights do not yield fourth-order accuracy for the elastic Helmholtz equation (as we show in Section \ref{sec:OOA}), they are still worth using to obtain a physically faithful discretization whose shear modes are nearly transversal to the propagation direction

The maximal pressure polarization error (see definition \ref{def:polarization-p}), which we do not present visually, is constant and is about $8.8^\circ$ in two dimensions and $10.2^\circ$ in three dimensions.
It is worth mentioning that the non-symmetry of the GMAC discretization \eqref{eq:MAC-dispersion} is an advantage here: 
The weight $a=1$, corresponding to a symmetric standard MAC, guarantees equal shear and pressure polarization errors.
Allowing the non-symmetry enables significantly smaller shear polarization errors.

\begin{figure}
\begin{center}
	\newcommand{\image}[1]{\includegraphics[width=0.4\linewidth]{./#1}}
  \subfigure[\footnotesize 3D dispersion relation for $G_s=2.6$]{\image{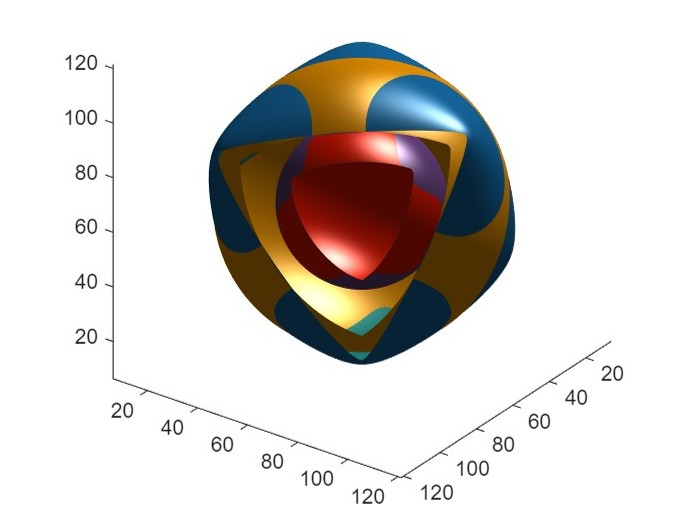}\label{fig:ball}}
     \hspace{20pt}
    \subfigure[\footnotesize Polarization vs. Laplacian weight]{\image{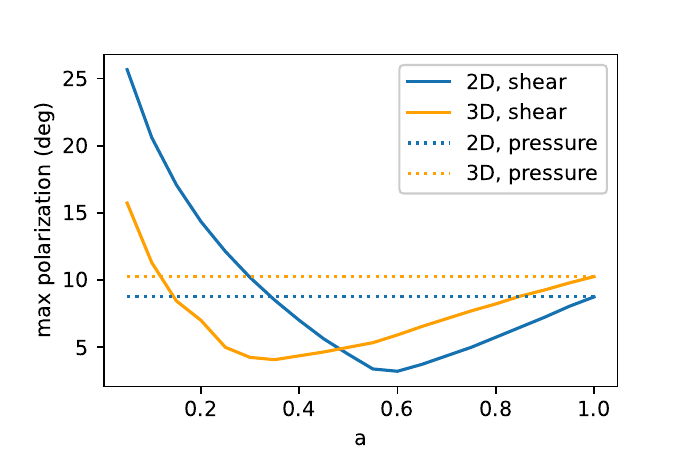}\label{fig:max_polarization_vs_a}}
\\
\end{center}
\caption{On the left:
shear and pressure three-dimensional dispersion relations for GMAC (ST analog) with $G=2.6$, for a sample of $126^3$ points $\theta\in[-\pi,\pi]^3$.
Orange: continuous shear, turquoise: discrete shear, red: continuous pressure, and purple: discrete pressure dispersion relations.
On the right: maximal polarization error vs. the Laplacian weight $a$ in the two-dimensional stencil \eqref{eq:JSS_sten} or the three-dimensional stencil  \eqref{eq:Lap-19-pt-sten}.
}\label{fig:ball_polarization}
\end{figure}

In two dimensions, the polarization was not a limitation, since all the discretizations in Subsection \ref{subsec:numerical2D} used $a=1,2/3$ or $0.5461$, which all guarantee reasonable polarization errors by Fig. \ref{fig:max_polarization_vs_a}. 
On the contrary, in three dimensions, the polarization presents significant limitations:
The near-zero $a$ in GMAC (JSS analog) yields a large shear-polarization error.

\begin{figure}
\begin{center}
	\newcommand{\image}[1]{\includegraphics[width=0.48\linewidth]{./#1}}
    \subfigure[\footnotesize 3D GMAC (ST analog), no EDC]{\image{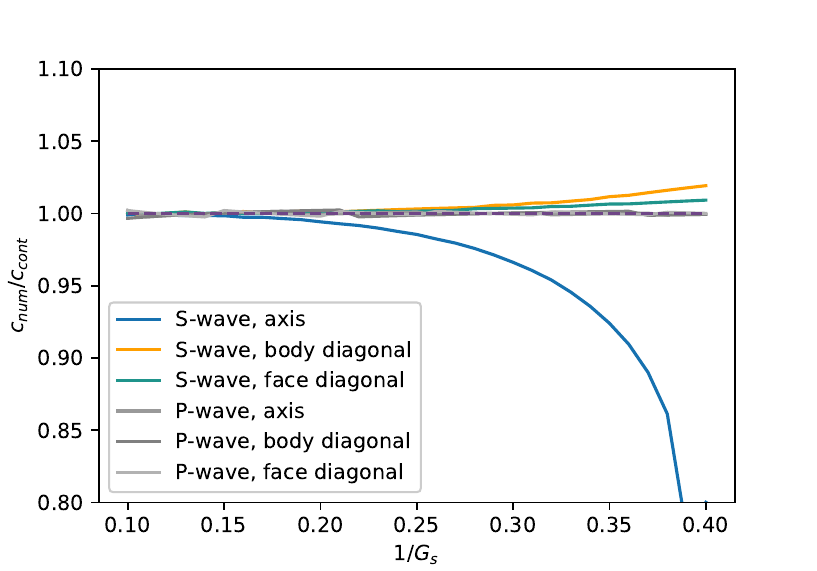}\label{fig:disp_err_vs_OneOverG_HO_3D}}
   \subfigure[\footnotesize 3D GMAC (JSS analog) with EDC, high polarization]{\image{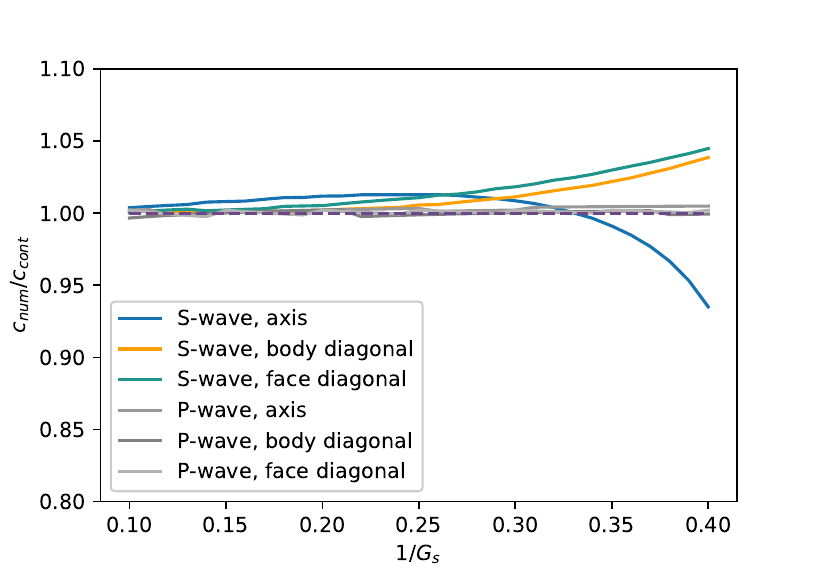}\label{fig:disp_err_vs_OneOverG_Corr_3D}}\\
\end{center}
\caption{Dispersion error vs. $1/G_s$ for and different propagation directions for the elastic Helmholtz equation in three dimensions.
On the left, for GMAC (ST analog) without EDC, and on the right, for GMAC (JSS analog) with EDC.
The latter has significant polarization error, see Fig. \ref{fig:max_polarization_vs_a}.
}\label{fig:disp_err_3D}
\end{figure}

In Fig. \ref{fig:disp_err_3D}, we display the phase-velocity ratio for several propagation directions in three dimensions vs. $1/G_s$.
In Fig. \ref{fig:disp_err_vs_OneOverG_HO_3D}
we show results for GMAC (ST analog) without EDC,
and in Fig. \ref{fig:disp_err_vs_OneOverG_Corr_3D} for GMAC (JSS analog) with EDC. 
The maximal dispersion error for the former is 22\% and for the latter 6.5\%, for $ G_S> 2.5$. 
We revisit the GMAC (JSS analog) discretization in Section \ref{sec:OOA} and show that it yields significantly higher truncation errors despite lower numerical dispersion, due to its large polarization error.
From the comparison of Fig. \ref{fig:disp_err_vs_OneOverG_HO_3D} to Fig. \ref{fig:disp_err_vs_OneOverG_fourth_2D}, 
we observe that the  GMAC (ST analog) behaves similarly to the two-dimensional GMAC-ST.
This is also true when adding a further correction through EDC, and for brevity, we do not include a visual demonstration for the three-dimensional case.

As expected from Theorem \ref{thm:no_dissipation}, in all the examined cases, the numerical dissipation (see Definition \ref{def:dissipation}) was numerically zero, similarly to the two-dimensional case. 
Moreover, the shear mode separation (see Definition \ref{def:shear_split}) was identically zero, as expected from Theorem \ref{thm:no_shear_split}.

\paragraph*{Comparison to other dispersion correction methods}
We quantitatively measure the errors presented in Fig. \ref{fig:disp_err_vs_OneOverG_HO_3D}, and achieve errors of about $6\%$ for $1/G_s = 0.4$ and $2\%$ for $ 1/G_s = 0.3$, for GMAC (ST analog) without EDC.
Our method achieves a dispersion error of only $0.58\%$  for $1/G_s = 0.2$.
For comparison, the compact FD method suggested in \cite{gosselin20143d} achieves a maximum phase error of $4\%$ for $1/G_s = 0.2$, as the authors report there.
That is, our method achieves a \emph{reduction of about $\times 7$ in dispersion error} for 5 grid points per shear wavelength compared to the compact method in \cite{gosselin20143d}.
We also compare our method to the non-compact method of \cite{li2016Fourth}, which uses grids with up to 3 cells distant from the stencil center in each direction.
Their method requires $G_s=3.7$ to keep the phase error below 1\%, as the authors report in \cite{li2016Fourth}.
In our GMAC-ST with EDC, a similar number of grid points per shear wavelength, $G_s=3.85,$ suffices to keep the error below 1\%. 
Overall, our method outperforms the compact method and is comparable to non-compact methods.

\section{Order of accuracy}\label{sec:OOA}

In this section, we first prove that our GMAC discretization method \eqref{eq:MAC-dispersion} has second-order accuracy for all weights $[a,b,c]$ in \eqref{eq:MAC-dispersion}.
While the acoustic Helmholtz equation achieves fourth-order accuracy for some compact stencils see Section \ref{sec:acoustic}, the GMAC with those stencil weights is only second-order accurate.
As we show below, this loss of accuracy is due to the cross-derivatives.
Second, we numerically demonstrate the order of accuracy and the magnitude of the truncation error for different GMAC methods, with and without EDC, in two and three dimensions.

\subsection{Order of accuracy analysis}\label{subsec:OOA_analysis}

We compute the order of accuracy in two dimensions; the generalization to three dimensions is straightforward.
We focus on the GMAC-ST; see the beginning of Subsection \ref{subsec:numerical2D}, since this is our analog of the fourth-order discretization for the acoustic Helmholtz equation.
Since the grad-div formulation \eqref{eq:elasticHelmEquivalent} and the mixed formulations \eqref{eq:elasticMixed} of the elastic Helmholtz equation are equivalent, one can discretize the grad-div formulation \eqref{eq:elasticHelmEquivalent} using GMAC by taking the primal Schur complement of \eqref{eq:MAC-dispersion}.
Therefore, it is enough to show that the resulting grad-div is second-order accurate.
To do so, we first show that the first derivatives are second-order accurate; then, building on that, we develop expressions for the cross-derivatives.

We shall show that the corresponding first derivative \eqref{eq:dxSpread} with $a=2/3$ is second-order accurate.
A direct expansion shows that the following  ``skew'' first derivative, corresponding to $a=-1/3$ in \eqref{eq:dxSpread}, is fourth-order accurate:
\begin{align*}
\textstyle
(\partial_{x_1})_{h/2}^{-1/3}f &\textstyle  = \frac{1}{2h}\left(f\left(x_1+\frac{h}{2},x_2+h\right) - f\left(x_1-\frac{h}{2},x_2+h\right) + f\left(x_1+\frac{h}{2},x_2-h\right) - f\left(x_1-\frac{h}{2},x_2-h\right)\right) \\
\textstyle & \textstyle = \partial_{x_1}f(x_1,x_2) + \mathcal{O}(h^4).
\end{align*}
We recall that the standard central-difference derivative of $h/2$ is second-order accurate:
\begin{equation*}
\textstyle
(\partial_{x_1})_{h/2}f=
\frac{1}{h}\left(f\left(x_1+\frac{h}{2},x_2\right) - f\left(x_1-\frac{h}{2},x_2\right)\right)= \partial_{x_1}f(x_1,x_2) + \mathcal{O}(h^2).
\end{equation*}
The first derivative \eqref{eq:dxSpread} with $a=2/3$ satisfies
\begin{equation*}
(\partial_{x_1})_{h/2}^{2/3} = \frac{5}{6}(\partial_{x_1})_{h/2} + \frac{1}{6}(\partial_{x_1})_{h/2}^{-1/3}
\end{equation*}
and this term necessarily has an error of second-order, as an average of second- and fourth-order approximations (the second-order term cannot cause cancellation since it only appears in the first summand).
A similar argument holds for every $a\in[0,1]$.
Similarly, it is true for the derivative in the $x_2$ direction, and the resulting gradient and divergence are also second-order accurate.

Next, we show that the grad-div is second-order accurate.
Using \eqref{eq:GradAndDiv},
\begin{equation*}
\grad_h^{2/3} (\div)_h = \begin{pmatrix}
(\partial_{x_1})_{h/2}^{2/3} (\partial_{x_1})_{h/2} & (\partial_{x_1})_{h/2}^{2/3} (\partial_{x_2})_{h/2} \\[0.5em]
(\partial_{x_2})_{h/2}^{2/3} (\partial_{x_1})_{h/2} & (\partial_{x_2})_{h/2}^{2/3} (\partial_{x_2})_{h/2}
\end{pmatrix}.
\end{equation*}
Since each of the entries of this matrix is a product of two approximations with second-order accuracy, the order is at least two.
To show that the order is at most two, we consider the resulting approximation of the second derivatives and cross derivatives.

First, $(\partial_{x_1})_{h/2}^{2/3} (\partial_{x_1})_{h/2}$ has second-order accuracy. 
Let $f(x_1,x_2)$ be a function.
Then a direct calculation shows
\begin{align*}
\textstyle
(\partial_{x_1})_{h/2}^{2/3} (\partial_{x_1})_{h/2}
&\textstyle = (\partial_{x_1})_{h/2}^{2/3}
\left(\frac{1}{h}\left(f\left(x_1+\frac{h}{2},x_2\right)-f\left(x_1-\frac{h}{2},x_2\right)\right)\right) \\
 & = \frac{1}{12h^2}
( f(x_1+h,x_2+h) - 2 f(x_1,x_2+h) + f(x_1-h,x_2+h) \\
 & \qquad \qquad  +
 10f(x_1+h,x_2) - 20 f(x_1,x_2) + 10f(x_1-h,x_2) \\
  & \qquad \qquad +
  f(x_1+h,x_2-h) - 2 f(x_1,x_2-h) + f(x_1-h,x_2-h) ).
\end{align*}
That is, in stencil notation
\begin{equation*}
\textstyle
(\partial_{x_1})_{h/2}^{2/3} (\partial_{x_1})_{h/2}
= \frac{1}{12h^2}\begin{bmatrix}
1 & -2 & 1 \\
10 & -20 & 10 \\
1 & -2 & 1
\end{bmatrix}
\end{equation*}
and this is only a second-order-accurate approximation of the continuous second derivative $\partial^2_{x_1,x_1}$, as can be seen from a direct Taylor expansion.
A similar calculation holds for $(\partial_{x_2})_{h/2}^{2/3} (\partial_{x_2})_{h/2}$.

Second, we show that the cross-derivative $(\partial_{x_1})_{h/2}^{2/3} (\partial_{x_2})_{h/2}$ is of second-order accuracy.
\begin{align*}
\textstyle
(\partial_{x_1})_{h/2}^{2/3} (\partial_{x_2})_{h/2}f &\textstyle = (\partial_{x_1})_{h/2}^{2/3} \left( \frac{1}{h} \left(f\left(x_1,x_2+\frac{h}{2}\right) - f\left(x_1,x_2-\frac{h}{2}\right)\right)\right) \\
\textstyle & \textstyle = \frac{1}{12h^2} \bigg(-f\left(x_1-\frac{h}{2},x_2+\frac{3h}{2}\right)+f\left(x_1+\frac{h}{2},x_2+\frac{3h}{2}\right) \\
\textstyle & \textstyle \qquad \qquad 
-9f\left(x_1-\frac{h}{2},x_2+\frac{h}{2}\right) + 9f\left(x_1+\frac{h}{2}, x_2+\frac{h}{2}\right)\\
\textstyle & \textstyle \qquad \qquad 
+9f\left(x_1-\frac{h}{2},x_2-\frac{h}{2}\right) -9f\left(x_1+\frac{h}{2}, x_2-\frac{h}{2}\right) \\
\textstyle & \textstyle \qquad \qquad 
+f\left(x_1-\frac{h}{2}, x_2-\frac{3h}{2}\right) -f\left(x_1+\frac{h}{2},x_2-\frac{3h}{2}\right) \bigg).
\end{align*}
In stencil notation,
\begin{equation*}
(\partial_{x_1})_{h/2}^{2/3} (\partial_{x_2})_{h/2}
=
\frac{1}{12h^2}\begin{bmatrix}
-1 & & 1 \\
 & & \\
 -9 & & 9 \\
  & * & \\
 9 & & -9 \\
  & & \\
1 & & -1
\end{bmatrix},
\end{equation*}
and again, a direct Taylor expansion shows that this is a second-order approximation of the continuous cross-derivative $\partial^2_{x_1,x_2}$.
A similar calculation holds for $(\partial_{x_2})_{h/2}^{2/3} (\partial_{x_1})_{h/2}$.

We conclude that the given approximation for the grad-div operator is second-order accurate for $a=2/3$. Hence, the overall discretization of \eqref{eq:elasticHelmEquivalent} is second-order accurate.
We note that this occurs even when the gradient and divergence are chosen to yield the fourth-order Laplacian when multiplied as div-grad.
Clearly, for other values of $a$, the order does not exceed $2$, for similar reasons.

\subsection{Numerical order of accuracy results}\label{subsec:OOA_numerical}

\begin{figure}
\begin{center}
	\newcommand{\image}[1]{\includegraphics[width=0.32\linewidth]{./#1}}
   \subfigure[\footnotesize Acoustic Helmholtz, 2D]{\image{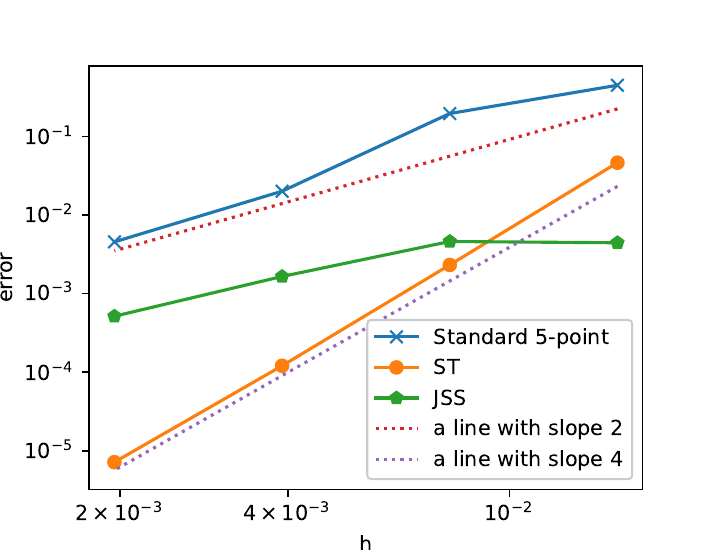}\label{fig:OOA_acoustic_mp2}}
       \subfigure[\footnotesize Elastic Helmholtz (mixed), 2D]{\image{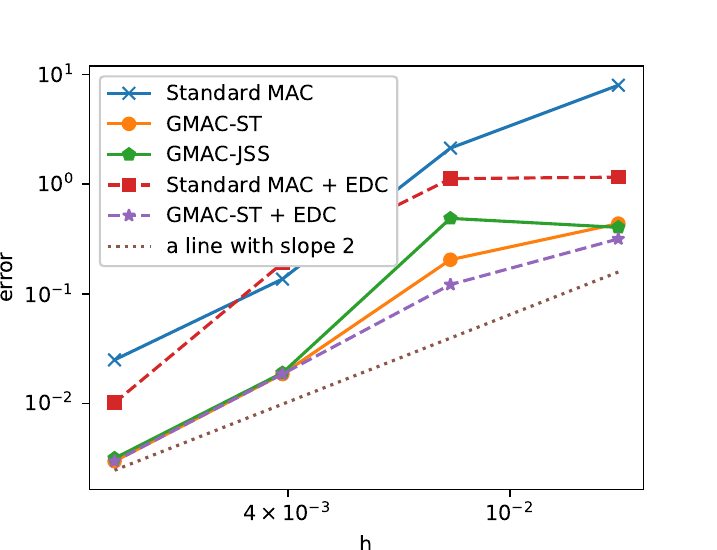}\label{fig:OOA_elastic_mp2}}
   \subfigure[\footnotesize Elastic Helmholtz, 3D]{\image{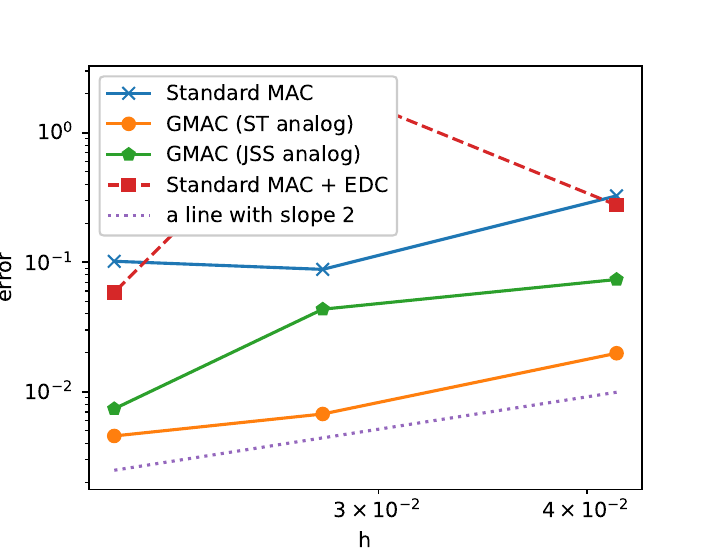}\label{fig:OOA3D_large}}
   \\
\end{center}
\caption{Truncation error vs. grid size in log-log plot for different discretizations of the acoustic and elastic Helmholtz equations.
On the left, the two-dimensional acoustic Helmholtz equation \eqref{eq:acousitcHelm} is discretized with standard second-order FD or with the fourth-order FD by Singer and Turkel \cite{singer1998high}, or by the second-order dispersion correction FD by Jo, Shin and Suh \cite{jo1996optimal}.
In the middle, the two-dimensional elastic Helmholtz equation in mixed formulation \eqref{eq:elasticMixed}
is discretized by standard MAC, GMAC-ST and GMAC-JSS.
On the right, the three-dimensional elastic Helmholtz equation \eqref{eq:elasticHelmEquivalent} is discretized by standard MAC, GMAC (ST analog), or GMAC (JSS analog).
Optimized constants for EDC are given in Table \ref{tab:tpts}.
}\label{fig:OOA}
\end{figure}

In this subsection, we give numerical experiments to test the performance of our discretization and dispersion correction method.
We performed the experiments in Julia \cite{Julia} using double-precision arithmetic.
We ran the experiments on a Lenovo ThinkPad X1 Carbon Gen 13 laptop running Windows 11, equipped with an Intel Core Ultra 7 265U processor and 64 GB of RAM.

To validate the calculation of the order of accuracy from Subsection \ref{subsec:OOA_analysis}, we show in Fig. \ref{fig:OOA} the truncation error in a log-log plot for different cases.
We calculate the truncation error for the elastic Helmholtz equation with a manufactured solution in two and three dimensions.
For comparison, we also give results for the acoustic Helmholtz equation \eqref{eq:acousitcHelm}.
In two dimensions, each of the equations is solved in the unit square $[0,1]^2$ on four grid sizes: $64^2$, $128^2$, $256^2$, and $512^2$ cells; in three dimensions, on three grid sizes: $24^3$, $36^3$, and $48^3$.

In Fig. \ref{fig:OOA_acoustic_mp2} we include results for the acoustic Helmholtz equation.
We discretize \eqref{eq:acousitcHelm} with a constant slowness model $\kappa=1$ and angular frequency $\omega = 32 \pi$ on a nodal grid with 4 grid points per shear wavelength on the smallest grid and 32 on the largest grid.
We use the oscillatory manufactured solution \begin{equation}\label{eq:manufactured_acoustic}
u_{exact}(x_1,x_2) = \sin(k_1 x_1)\cos(k_2 x_2),
\end{equation} 
where $k_1 = \omega$ and $k_2 = \frac{\omega}{2}$,
and impose Dirichlet boundary conditions with the values of $u_{exact}$ on the boundary.
For the right-hand side, we apply the continuous Helmholtz operator from \eqref{eq:acousitcHelm} to the manufactured solution, which gives
\begin{equation}\label{eq:RHS_acoustic}
-\Delta u_{exact} -\omega^2 u_{exact} = (k_1^2+k_2^2-\omega^2) u_{exact}.
\end{equation}
We discretize the right-hand side using the same stencil as for the mass matrix in \eqref{eq:JSS_sten} to maintain the order of accuracy; see \eqref{eq:harari}.
This is done by multiplying the vector on the right-hand side, obtained by sampling \eqref{eq:RHS_acoustic} at the grid nodes, by an appropriate mass matrix.
We present the normalized error 
\begin{equation}
\bfe = \sqrt{h_1 h_2}\|\bfu_{num}-\bfu_{exact}\|_2
\end{equation}
in a log-log plot, where $\bfu_{exact}$ is a vector of samples of $u_{exact}$ in the grid locations and $\bfu_{num}$ is the direct solution of the algebraic equation.
The results in Fig. \ref{fig:OOA_acoustic_mp2} show second-order accuracy for the standard 5-point stencil and for the JSS stencil presented by \cite{jo1996optimal}.
For the ST stencil presented by Singer and Turkel in \cite{singer1998high}, we demonstrate fourth-order accuracy, as expected from their theoretical calculations.

In fig. \ref{fig:OOA_elastic_mp2} we present results for the two-dimensional elastic Helmholtz equation in mixed formulation.
We discretize 
 \eqref{eq:elasticMixed} 
in homogeneous media, with constant Lam{\'e} coefficients $\lambda=\mu=1$ corresponding to Poisson ratio $\sigma=0.25$, and with constant density $\rho=1$, using the following discretizations:
standard MAC, GMAC-ST or GMAC-JSS (see Subsection \ref{subsec:numerical2D}). 
We use the vector-valued manufactured solution
\begin{equation}\label{eq:manufactured}
\vec{u}_{exact}(x_1,x_2)
=\vec{u}_s(x_1,x_2)+\vec{u}_p(x_1,x_2)=\begin{pmatrix}\sin (k_1 x_1)
\cos(k_2 x_2)\\
 -
\frac{k_1}{k_2} \cos(k_1 x_1)\sin(k_2 x_2)\end{pmatrix} +
\begin{pmatrix}\sin(k_p x_1)\cos(k_p x_2)\\ \cos(k_p x_1)\sin(k_p x_2)\end{pmatrix},
\end{equation}
where $k_1=\frac{\omega}{\sqrt{\mu}}$, $k_2=\frac{\omega}{2\sqrt{\mu}}$ and $k_p = \frac{\omega}{\sqrt{\lambda+2\mu}}$. 
This is analogous to the oscillatory solution in the acoustic case, where $\vec{u}_s$ is divergence-free and represents shear waves only.
\footnote{We avoid choosing $k_1=k_2$ because it causes the leading error term to cancel by symmetry, and the GMAC-ST curve appears to have fourth-order accuracy, whereas the discretization is generally only second-order accurate.}.
We impose non-homogeneous Dirichlet boundary conditions with the value of the $\vec u_{exact}$ on the boundaries. 
The right-hand side is calculated by applying the continuous elastic Helmholtz operator in the mixed formulation \eqref{eq:elasticMixed} to the manufactured solution.
The resulting forcing term is therefore 
\begin{equation}
\vec{q}_s(x_1,x_2,x_3) = 
\left(\mu (k_1^2+k_2^2)-\omega^2\rho\right)\vec {u}_s + \left(2(\lambda+2\mu)k_p^2-\omega^2\rho\right)\vec{u}_p.
\end{equation}
We discretize this right-hand side using the same mass matrix we used to spread the mass term in the leading block (see Subsection \ref{subsec:schemes2D}).
Note that this right-hand side lives in the faces space in MAC locations, so each of the blocks in this vector is multiplied by an adjusted mass matrix for the corresponding face.
We concatenate the resulting vector with a vector of zeros of the size of the $p$-space, to fit the right-hand-side for the mixed formulation \eqref{eq:elasticMixed}.
Similar to the acoustic case, we use angular frequency $\omega = 32 \pi$ that corresponds to $G_s=4$ on the smallest grid and $G_s=32$ on the largest grid.


Fig.  \ref{fig:OOA_elastic_mp2} shows second-order accuracy for the standard MAC, GMAC-ST and GMAC-JSS, as expected from Subsection \ref{subsec:OOA_analysis}. We also compare the truncation error with and without EDC (see Section \ref{sec:lame}), for standard MAC and for GMAC-ST.
We first optimize $t_p$ and $t_s$ for \eqref{eq:s_def} using the theory described in Section \ref{sec:analysis}. 
In two dimensions,
we sample the dispersion relation at angles in $[0,45]$ degrees, with 5-degree jumps between samples.
In three dimensions, we sample in 15-degree jumps the directions $\phi=[\phi_{azimuthal},\phi_{polar}]$ for which 
$\phi_{azimuthal}\in\left[0,\frac{\pi}{4}\right]$ and 
$\phi_{polar}\in\left[\frac{\pi}{2}-\arccos\left(\frac{1}{\sqrt{3}}\right),\frac{\pi}{2}\right]$.
For each direction $\phi$, we sample the dispersion error \eqref{eq:disp_err} at a ray angle $\phi$ in jumps of $0.0001$.
Then, we sum the absolute values of the shear and pressure dispersion errors, and determine $t_p$ and $t_s$ that minimize the summed error over propagation directions.
The resulting optimal values are reported in Table \ref{tab:tpts}.
As seen in Fig. \ref{fig:OOA_elastic_mp2},
the EDC process reduces the error for both examined discretizations.
The reduction is more significant for the standard MAC, since the GMAC-ST already has low dispersion before the additional correction.

We perform a similar experiment for the three-dimensional elastic Helmholtz equation in grad-div formulation \eqref{eq:elasticHelmEquivalent}, which is the Schur complement of the mixed formulation \eqref{eq:elasticMixed}.
The grad-div formulation is used here for memory considerations, and is discretized by the same stencils for gradient, divergence, Laplacian, and mass described in Subsection \ref{subsec:schemes2D}.
That is, the GMAC discretization in this case is the Schur complement of \eqref{eq:MAC-dispersion}.
We solve the three-dimensional \eqref{eq:elasticHelmEquivalent} with constant Lam{\'e} coefficients $\lambda=\mu=1$ corresponding to Poisson ratio $\sigma=0.25$, and with constant density $\rho=1$.
We use the manufactured solution
\begin{equation}\label{eq:manufactured3D}
\vec{u}_{exact}(x_1,x_2,x_3)=\vec{u}_s(x_1,x_2,x_3)+\vec{u}_p(x_1,x_2,x_3)
\end{equation}
where
\begin{equation}\label{eq:usup3d}
\vec{u}_s=
\frac{1}{k_s}
\begin{pmatrix}
\sin(k_s x_1) (\cos(k_s x_2)\sin(k_s x_3) - \sin(k_s x_2)\cos(k_s x_3)) \\
\sin(k_s x_2) (\cos(k_s x_3)\sin(k_s x_1) - \sin(k_s x_3)\cos(k_s x_1)) \\
\sin(k_s x_3) (\cos(k_s x_1)\sin(k_s x_2) - \sin(k_s x_1)\cos(k_s x_2))
\end{pmatrix}, \, 
\vec{u}_p = \begin{pmatrix}
\sin(k_p x_1)\cos(k_p x_2)\cos(k_p x_3) \\
\cos(k_p x_1)\sin(k_p x_2)\cos(k_p x_3) \\
\cos(k_p x_1)\cos(k_p x_2)\sin(k_p x_3)
\end{pmatrix}
\end{equation}
with $k_s = \frac{\omega}{\sqrt{\mu}}$ and $k_p = \frac{\omega}{\sqrt{\lambda+2\mu}}$.
The forcing term
is therefore
\begin{equation} \label{eq:forcing3D}
\vec{q}_s(x_1,x_2,x_3) = \frac{1}{k_s}
\left(3\mu k_s^2 -\omega^2\rho\right)\vec {u}_s + \left(3(\lambda+2\mu)k_p^2-\omega^2\rho\right)\vec{u}_p.
\end{equation}
We use an angular frequency that corresponds to $G_s=4$ on the smallest grid and $G_s=8$ on the largest grid.

The results in Fig. \ref{fig:OOA3D_large} demonstrate that the standard MAC, the GMAC (ST analog), and the GMAC (JSS analog) are second-order accurate, as expected.
Although they have the same order of accuracy, the GMAC (ST analog) has the lowest truncation error.
Adding EDC is efficient only for standard MAC.

\begin{table}
\centering
\begin{tabular}{l|cccc|ccc}
  \toprule
  \mc{8}{c}{Optimized $t_p$ and $t_s$ for EDC} \\
  \midrule
&  \mc{4}{c|}{2D Standard MAC} &  \mc{3}{c}{3D Standard MAC}  \\
     \midrule
  & $G_s = 4$ & $G_s = 8$ & $G_s = 16$ & $G_s = 32$  & $G_s = 4$ & $G_s = 6$ & $G_s = 8$ \\
    \midrule
   $t_p$ & 0.971 & 0.993 & 0.998 & 1.0 & 0.982 & 0.992 & 0.996 \\
   $t_s$ & 0.913 & 0.978 & 0.996 & 0.999 & 0.948 & 0.977 & 0.987 \\
     \midrule
&  \mc{4}{c|}{2D GMAC-ST}
&  \mc{3}{c}{3D GMAC (ST analog)} \\
     \midrule
  & $G_s = 4$ & $G_s = 8$ & $G_s = 16$ & $G_s = 32$  & $G_s = 4$ & $G_s = 6$ & $G_s = 8$ \\
       \midrule
   $t_p$ & 0.999 & 0.9999 & 1 & 1 & 1 & 1 & 1 \\
   $t_s$ & 0.994 & 0.9998 & 1 & 1 & 1 & 1 & 1 \\
 \bottomrule
 \end{tabular}
\caption{Optimal $t_p$ and $t_s$ for \eqref{eq:s_def}, minimizing the sum of absolute values of dispersion errors \eqref{eq:disp_err} over different propagation directions, for Poisson ratio $\sigma=0.25$.
}
\label{tab:tpts}
\end{table}

\paragraph{Summary of recommended choices}
As with acoustic dispersion correction methods, the same holds for their elastic analogs that we developed in this work.
In two dimensions, every elastic analog of one of the compact acoustic methods considered in Section~\ref{sec:acoustic} is physically reliable. 
While the recommendation depends on the aim, GMAC-ST and GMAC-JSS give similar results, with the former slightly outperforming the latter in terms of polarization and the latter slightly outperforming the former in terms of dispersion.
One can improve the dispersion correction by optimizing the GMAC weights for each $G_s$ value separately.
However, in three dimensions, polarization imposes an additional constraint. 
We therefore recommend the GMAC (ST analog).
The combination with EDC is mostly effective where the dispersion is significant, i.e., for standard MAC or for extremely small values of $G_s$.

\section{Computational results for heterogeneous media}\label{sec:heterogeneous}

\begin{figure}
\begin{center}
	\newcommand{\image}[1]{\includegraphics[width=0.48\linewidth]{./#1}}
    \subfigure[\footnotesize Linear media, $G_s \geq 5$, 0$^\circ$ section]{\image{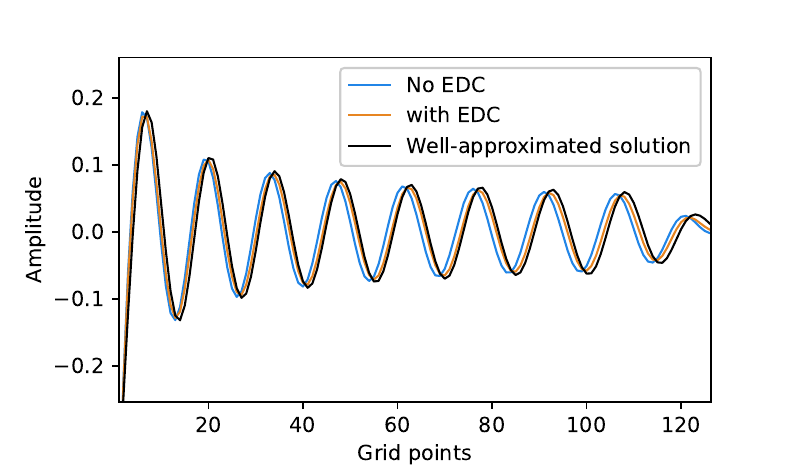}\label{fig:dispersionPlot0}}
   \subfigure[\footnotesize Linear media, $G_s \geq 5$, 45$^\circ$ section]{\image{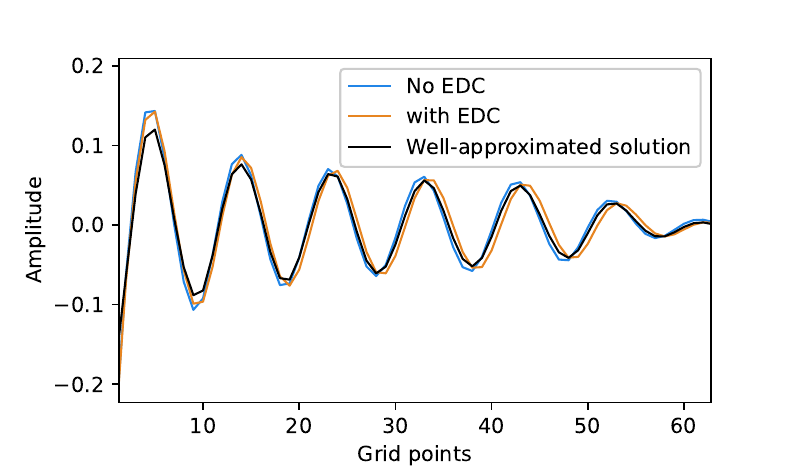}\label{fig:dispersionPlot45}}\\
\end{center}
\caption{A section of the pressure component of the solution of the elastic Helmholtz equation in mixed formulation in linear media, discretized by the standard MAC discretization on the unit square with a grid of $128\times128$ cells, with or without EDC from Section \ref{sec:lame}, compared to a reference solution achieved by approximating the same equation on a denser grid of $512\times512$ cells. The theoretically calculated shifts used in \eqref{eq:s_def} for the elastic dispersion correction are $t_s \in [0.95,0.997]$ and $t_p \in [0.99,1]$.
}\label{fig:dispersionPlot}
\end{figure}

In this section, we provide numerical evidence for the efficiency of our EDC process described in Section \ref{sec:lame} for the linearly varying Lam{\'e} coefficients $\lambda\in[4,10],$ $\mu\in[1,5]$ (corresponding to a Poisson ratio range $\sigma\in[0.357,4]$) and density $\rho\in[1,2]$. 
For simplicity, we use standard MAC, where the EDC effect is more pronounced.
We calculate optimized values for $t_p$ and $t_s$ in \eqref{eq:s_def} by sampling the dispersion relation described in Section \ref{sec:analysis}.
We calculate the dispersion error \eqref{eq:disp_err} for angles varying from $0^\circ$ to $45^\circ$ with jumps of $5^\circ$, by sampling the symbolic eigenvalues of the shear and pressure modes over a ray of the propagation direction with jumps of 0.001. We minimize the sum of the absolute errors across all propagation directions.
We repeat the optimization process separately for the values 
$\lambda$, $ \mu$, and $\rho$ at the bottom and top of the domain and use the resulting values to construct linearly varying $t_p$ and $t_s$. 
For the first step of the EDC process, we set 
\begin{equation}
\textstyle
\omega^* = \left(\frac{\min t_s + \max t_s}{2}\right)\omega.
\end{equation}
For the second step of the process, we compute a spatially variable $s$ from \eqref{eq:s_def} using the linearly varying $t_p$ and $t_s$ by point-wise operations.
For the last step, we apply the point-wise multiplication             $\lambda^* \!=\!  s \odot \lambda$.

In Fig. \ref{fig:dispersionPlot}, we solve the two-dimensional elastic Helmholtz equation in a mixed formulation
on the unit square $[0,1]^2$.
The right-hand side is a point source located in the center of the upper boundary of the domain.
We discretize equation \eqref{eq:elasticMixed} using the standard MAC, \eqref{eq:MAC-dispersion}, with weights $[a,b,c] = [1,1,0]$.
We use an absorbing boundary layer \cite{engquist1977absorbing} to prevent reflections from the boundaries. 
We implement the layer by adding a gradually increasing complex shift toward the boundaries (except at the edge where the source is located; there, we apply Neumann boundary conditions) and set its width to $\frac{1}{5}$ of the total domain width.
We display a section of the solution in different propagation directions and compare it with a well-approximated solution on a denser grid, with and without applying EDC.
The original $128\times128$ grid satisfies $G_s=5$, and the dense $512\times512$ grid satisfies $G_s=20$ in the most challenging regime of the domain
We show sections of the solution's pressure component at $0^\circ$ and $45^\circ$ relative to the vertical.
The $0^\circ$ section, shown in Fig. \ref{fig:dispersionPlot0}, extends from the source to the middle of the bottom boundary of the domain.
The $45^\circ$ section, shown in Fig. \ref{fig:dispersionPlot45}, goes from the source to the middle of a side boundary and therefore contains only half the grid points.
Since the propagation of a discretely approximated wave is direction-dependent, phase alignment cannot be perfect for all propagation directions.
However, we observe a better phase agreement after applying EDC.

Although analyzing heterogeneous elastic dispersion correction is beyond the scope of this paper, we provide numerical evidence of its efficiency.
Building a theory to analyze it is particularly challenging for nonsmooth coefficients.
The numerical results above demonstrate that our EDC, with nonconstant shifting parameters $t_p$ and $t_s$ in \eqref{eq:s_def} that act pointwise, improves phase alignment in mildly heterogeneous media with smoothly varying coefficients.

\section{Conclusion}\label{sec:conclusion}
We have introduced a family of GMAC (generalized marker and cell) discretizations for the elastic Helmholtz equation in mixed formulation. 
Our discretization scheme is compact and enables significant dispersion correction by choosing appropriate stencil weights.
Moreover, we present the EDC (elastic dispersion correction) process, which further corrects the dispersion by shifting the shear and pressure wavenumbers simultaneously.
Altogether, our method extends dispersion correction methods originally developed for the acoustic Helmholtz equation to the elastic Helmholtz equation. 
Although the GMAC discretization scheme is generally non-symmetric, it is free of numerical dissipation and shear-mode separation, as we prove and demonstrate numerically. 
One limitation of our method is that the high order accuracy achieved in an acoustic discretization may not carry over to its elastic counterpart. 
Nevertheless, the dispersion-correction properties persist even in low-order discretizations. 
Another limitation is that not every analog of a physical discretization of the acoustic Helmholtz equation yields a physical discretization of the elastic Helmholtz equation; 
for certain weight choices, the polarization error becomes too large. 
We compare the dispersion of our discretizations with that of existing compact discretizations for elastic waves, showing that our method yields about $\times 7$ lower dispersion error.
Moreover, the dispersion properties of our discretization make it a good alternative even for non-compact discretization for elastic wave propagation.
Overall, our framework enables a variety of dispersion-correction methods for the elastic Helmholtz equation using compact discretizations, thereby reducing dispersion, polarization, and truncation errors.
The analysis and optimization presented are restricted to the constant-coefficient elastic Helmholtz equation. 
We have shown numerical evidence for the efficiency of our elastic dispersion correction in heterogeneous media.

\section*{CRediT author contribution}

Rachel Yovel: conceptualization, methodology, formal analysis, software, validation, writing --- original draft, writing --- review \& editing.

Eli Turkel: methodology, supervision, writing --- review and editing.

Eran Treister: methodology, supervision, software, writing --- review and editing.

\bibliographystyle{elsarticle-num}


\bibliography{main.bbl}

\end{document}